\numberwithin{equation}{section}
\providecommand{\U}[1]{\protect\rule{.1in}{.1in}}
\providecommand{\U}[1]{\protect \rule{.1in}{.1in}}
\newtheorem{theorem}{Theorem}[section]
\newtheorem{example}[theorem]{Example}
\newtheorem{lemma}[theorem]{Lemma}
\newtheorem{proposition}[theorem]{Proposition}
\newtheorem{remark}[theorem]{Remark}
\newtheorem{assumption}[theorem]{Assumption}
\newenvironment{proof}[1][Proof]{\noindent \textbf{#1.} }{\  \rule{0.5em}{0.5em}}
\DeclareMathOperator*{\esssup}{ess\,sup}
\DeclareMathOperator*{\essinf}{ess\,inf}
\def \P{\mathsf{P}}
\def \E{\mathsf{E}}
\newcommand{\fr}[1]{{\textcolor{blue}{#1}}}
\begin{document}
	\title{Optimal Consumption for Recursive Preferences with Local Substitution under Risk}
	\author{ Hanwu Li\thanks{Research Center for Mathematics and Interdisciplinary Sciences, Shandong University,
			lihanwu@sdu.edu.cn, and Frontiers Science Center for Nonlinear Expectations (Ministry of Education), Shandong University. Li's research was supported by the Natural Science Foundation of Shandong Province (No. ZR2022QA022), the Natural Science Foundation of Shandong Province for Excellent Young Scientists Fund Program (Overseas) (No. 2023HWYQ-049) and the National Natural Science Foundation of China (No. 12301178).} \and 
			Frank Riedel\thanks{Center for Mathematical Economics, Bielefeld University, frank.riedel@uni-bielefeld.de, and School of Economics, University of Johannesburg. Riedel’s research was supported by the  German Science Foundation (DFG) – SFB 1283/2 2021 – 317210226.}}
			\date{}
		\maketitle

\begin{abstract}
	We explore intertemporal preferences that are recursive and account for local intertemporal substitution. First, we establish a rigorous foundation for these preferences and analyze their properties. Next, we examine the associated optimal consumption problem, proving the existence and uniqueness of the optimal consumption plan. We present an infinite-dimensional version of the Kuhn-Tucker theorem, which provides the necessary and sufficient conditions for optimality. Additionally, we investigate  quantitative properties and the construction of the optimal consumption plan. Finally, we offer a detailed description of the structure of optimal consumption within a geometric Poisson framework.
\end{abstract}

 \textbf{Key words}: recursive utility, intertemporal substitution, Hindy-Huang-Kreps preferences, backward stochastic differential equation with jumps, Poisson processes

    \textbf{MSC-classification}: 60H10, 60H30
		
\section{Introduction}	
In this paper, we study recursive intertemporal preferences that allow for intertemporal substitution in a stochastic jump-diffusion framework.
We provide a rigorous foundation for the representing utility functional through backward stochastic differential equations and we establish conditions that ensure existence and uniqueness of the related optimal consumption choice problem. We characterize optimal solutions through first-order conditions and we show how to construct optimal plans based on the concept of a minimal level of satisfaction.

	Intertemporal utility functions are the basic building block for dynamic economic models. One of the most frequently used model is the time-additive expected utility functional. More precisely, consider a cumulative consumption plan $\{C_t\}_{t\in[0,T]}$ given in terms of the consumption rate $\{c_t\}_{t\in[0,T]}$, i.e., $C_t=\int_0^t c_sds$, the time-additive utility functional takes the following form
	\begin{displaymath}
		U(c)=\E\left[\int_0^T u(t,c_t)dt\right].
	\end{displaymath}
 
	Hindy, Huang and Kreps  \cite{HHK} point out severe   shortcomings of such an approach as it violates the natural substitutability of consumption at nearby times. They   introduce intertemporal utility functions of the following form:
	\begin{displaymath}
		U(C)=\E\left[\int_0^T u(t,Y^C_t)dt\right],
	\end{displaymath}
	where $u$ is a continuous felicity function which is increasing and concave in its second argument while $Y^C$ is the investor's level of satisfaction satisfying
	\begin{displaymath}
		Y^C_t=\eta_t+\int_0^t \theta_{t,s}dC_s.
	\end{displaymath}
	Here, $\theta_{t,s}$ can be interpreted as the weight assigned at time $t$ to the consumption made up to time $t$ and $\eta_t$ can be regarded as an exogenously given level of satisfaction. 
 
 Duffie and Epstein \cite{DE} point out that intertemporal utility should be recursive, i.e., take future utility into account.  They introduce the following recursive utility function:
	\begin{displaymath}
		U_t(c)=\E_t\left[\int_t^T f(s,c_s,U_s(c))ds\right],
	\end{displaymath}
	where $f$ is called the aggregator function which aggregates current consumption $c_t$ and future utility to the current utility level. In fact, this kind of recursive utility corresponds to the solution of a backward stochastic differential equation (BSDE) first proposed by Pardoux and Peng \cite{PP90}.
 
	In this paper, we combine the stochastic differential utility of \cite{DE} with the local substitution approach of \cite{HHK}.	
We  focus on  utility functions which can be represented as 
	\begin{displaymath}
		U_t^C=\E_t\left[\int_t^T f(s,Y_s^C,U_s^C)ds\right].
	\end{displaymath}
	The advantage lies in the following aspects. First, the recursiveness describes the future utility's influence on the current marginal utility, i.e., the future behavior has a feedback effect on the current state, which improves the HHK model. Second, this kind of utility is not confined to absolutely continuous consumption plans and it presents the desirable intertemporal substitution properties. Hence, it is more reasonable and suitable to the practical cases compared with the utility functions proposed by Duffie and Epstein \cite{DE} and Hindy, Huang and Kreps \cite{HHK}. Furthermore, different from \cite{DE}, the underlying driven process is not restricted to Brownian motion. In fact, in this paper, we consider a setting consists of a Brownian motion and a Poisson random measure. The recursive utility induced by consumption rate in the presence of L\'{e}vy jumps has been considered by \cite{KS,Ma,S}. In this paper, we first establish the theory of recursive utility induced by the level of satisfaction under a L\'{e}vy setting with the help of BSDE with jumps (see \cite{BBP, Delong,Royer,TL} and the reference therein). To the best of our knowledge, this work is the first attempt to solve such kind of problem.
	
	The objective of the paper is try to maximize the recursive utility over all budget feasible set. First, we establish the existence and uniqueness result for the stochastic optimization problem. The uniqueness relies on the strict concavity of recursive utility. The difficulty for proving the existence is due to the fact that the budget feasible set is not compact. Inspired by the method used in \cite{BR}, we  apply the infinite-dimensional extension of Koml\'{o}s' theorem (see \cite{K99,K67}) to construct the optimal consumption plan. Then, we investigate the characterization of the optimal consumption plans, i.e., the infinite-dimensional version of the Kuhn-Tucker theorem, called the first-order conditions, which extends beyond the Hamilton-Jacobi-Bellman equations framework, see \cite{HH}. It generalizes the first-order conditions in \cite{BR00} and \cite{BR} to a stochastic and recursive setting by applying the stochastic Gronwall inequality.
	
	It is important to note that the optimal consumption derived in the proof of existence is not constructive. Drawing on the discussion in \cite{BR}, we conjecture that the optimal policy involves consuming just enough to maintain the associated level of satisfaction above a certain threshold, referred to as the minimal level of satisfaction. Unlike the time-additive case, the recursive nature of the utility introduces a key difference: the minimal level of satisfaction is determined by a fully-coupled forward-backward system that arises from the first-order conditions. This system serves a similar function as the Hamilton-Jacobi-Bellman equation in the dynamic programming framework does.
 
 %Therefore, in our model, we do not need to restrict ourselves to the Markovian setting.

    Another contribution of the paper is the analysis of the sample paths of optimal consumption plans. We consider the following two types of consumption behavior: lump-sum consumption and consumption in rates. Recalling that in the deterministic case studied in \cite{LRY}, for $t$ in a free interval, we have 
    \begin{align*}
        \exp\left(\int_0^s \partial_u f(r,Y_r^C,U_r^C)dr\right)\partial_y f(s,Y_s^C,U_s^C)=M\frac{r+\beta}{\beta}e^{-rt},
    \end{align*}
    where $M$ is the Lagrange multiplier, $r$ is the interest rate and $\beta$ is the depreciation rate. A similar relation still holds for the stochastic case with $e^{-rt}$ of the right-hand side being replaced by the state price density. When there is lump consumption at $t$, the above equality becomes an inequality. We show that lump-sum consumption can only occur when a surprise happens, which means that at that point, the filtration is not left-continuous or there is a jump of the state price density. With the help of the quantitative properties, we provide a more detailed structure of the optimal consumption plan when the state price density is characterized by a geometric Poisson process.

	The paper is organized as the follows. In Section 2, we   formulate the recursive utility   in detail and study its properties.   Section 3 is devoted to well-posedness of the utility maximization problem and derives the necessary and sufficient condition for optimality when the financial market is complete. In Section 4, we investigate the construction and some properties of the optimal consumption plan. Finally, we study a typical case where the state price density is given by a geometric Poisson process.%, we provide a more accurate description of the structure of optimal consumption plans.

\section{Stochastic Differential Utility with Intertemporal Substitution}

Let   $(\Omega,\mathcal{F},\P_0)$ be  a probability space carrying     a standard Brownian motion $B=\{B_t\}_{t\in[0,T]}$ and    a Poisson random measure $v$ on $(\mathbb{R}_*,\mathfrak{B}(\mathbb{R}_*))$ with intensity $\vartheta$, where $\mathbb{R}_*=\mathbb{R}\backslash \{0\}$. The associated compensated random measure is denoted by $\tilde{v}$, i.e., $\tilde{v}(dt,dx)=v(dt,dx)-dt\vartheta(dx)$. Denote by $(\mathcal{F}_t)_{t\in[0,T]}$ the filtration  generated by $B,v$ and the class of $\P_0$-negligible sets. We assume that $\vartheta$ is a $\sigma$-finite measure on $\mathbb{R}_*$ such that $\int_{\mathbb{R}_*}(1\wedge x^2)\vartheta(dx)<\infty$. For  more details,   refer to \cite{CT,Delong,KS} and the references therein. 

  Let the interest rate $r=\{r_t\}_{t\in[0,T]}$ be a bounded, progressively measurable process and denote by $\gamma_t=\exp(-\int_0^t r_sds)$ the discount factor.  Let $\mathcal{P}$ be a collection of $\P_0$-equivalent probability measures $\P^*$ on $(\Omega,\mathcal{F}_T)$. 
  Denote by $\E^*[\cdot]$ (resp. $\E[\cdot]$)   the expectation   under probability $\P^*$ (resp. $\P_0$).
  
Let $\mathcal{X}$ be  the set of consumption plans, consisting   of   distribution functions of nonnegative optional random measures.   An agent with initial wealth $w> 0$ faces the budget constraint 
\begin{displaymath}
\mathcal{A}(w)=\left\{C\in\mathcal{X}\,|\, \sup_{\P^*\in\mathcal{P}}\E^*\left[\int_0^T \gamma_tdC_t\right]\leq w\right\}.
\end{displaymath}

Let us now develop the model  for our preferences that extends the Hindy-Huang-Kreps (\cite{HHK}) approach to recursive utilities in our jump--diffusion setting.
For a fixed consumption plan $C\in\mathcal{X}$, the agent's level of satisfaction at time $t$ is given by
\begin{displaymath}
Y_t^C=\eta_t+\int_0^t \theta_{t,s}dC_s,
\end{displaymath}
where  $\eta:[0,T]\rightarrow [0,\infty)$ and $\theta:[0,T]^2\rightarrow(0,\infty)$ are continuous deterministic functions.   The quantity $\theta_{t,s}$ can be seen as the weight assigned at time $t$ to consumption made at time $s\leq t$ and $\eta_t$ describes an exogenous level of satisfaction for time $t$.   
We model the  recursive utility of the agent according to the following recursive equation
\begin{equation}\label{e3}
U_t^C=\E_t\left[\int_t^T f(s,Y_s^C,U_s^C)ds\right]
\end{equation}
for some aggregator $f:[0,T]\times\mathbb{R}_+\times\mathbb{R}\rightarrow\mathbb{R}$. 
We first need to show that such preferences are well-defined.

% which is increasing and concave in its second component.
\begin{assumption}\label{a1}
	\begin{description}
		\item[(i)] For any $s\in[0,T]$, $f(s,\cdot,\cdot)$ is strictly concave and continuous differentiable, and for any $s\in[0,T]$, $u\in\mathbb{R}$, $f(s,\cdot,u)$ is strictly increasing.
		\item[(ii)] For any $(s,y)\in[0,T]\times\mathbb{R}_+$, $u,u'\in\mathbb{R}$, there exists a constant $K$ such that
		\[|f(s,y,u)-f(s,y,u')|\leq K|u-u'|.\]
		\item[(iii)] For any $s\in[0,T]$, there exist two constants $\alpha\in(0,\frac{1}{2})$ and $K$ such that
		\[|f(s,y,0)|\leq K(1+|y|^\alpha).\]
         \item[(iv)] There exist a probability measure $\P^*\in \mathcal{P}$ and a constant $p>1$ with $2\alpha p<1$, such that $\frac{d\P_0}{d\P^*}\in L^q(\P^*)$ for $q=\frac{1}{1-2\alpha p}$.
	\end{description}
\end{assumption}

We   provide some examples of the probability measure $\P^*$ satisfying Assumption \ref{a1} (iv).

\begin{example}
 
\begin{description}
\item[(i)] For $\theta\neq 0$, if the density process takes the following form
	\begin{displaymath}
		\frac{d\P^*}{d\P_0}=\exp\left(\theta B_T-\frac{1}{2}\theta^2 T \right),
	\end{displaymath}
	then for any $p\geq 1$, $\frac{d\P_0}{d\P^*}\in L^p(\P^*)$. Indeed, it is easy to check that
	\begin{align*}
		\E^*\left[\left(\frac{d\P_0}{d\P^*}\right)^p\right]&=\E\left[\left(\frac{d\P^*}{d\P_0}\right)\left(\frac{d\P_0}{d\P^*}\right)^p\right]=\E\left[\exp\left(\theta(1-p) B_T-\frac{1}{2}\theta^2(1-p) T\right)\right]\\
  &=\E\left[\exp\left(\theta(1-p) B_T-\frac{1}{2}\theta^2(1-p)^2 T+\frac{1}{2}\theta^2p(p-1)T \right)\right]<\infty.
	\end{align*}

\iffalse
(i)	Let $\zeta=\{\zeta_t\}_{t\in[0,T]}$ be a bounded, progressively measurable process. If the density process takes the following form
	\begin{displaymath}
		\frac{d\P^*}{d\P_0}=\exp\left(\int_0^T \zeta_t dB_t-\frac{1}{2}\int_0^T \zeta_t^2 dt\right),
	\end{displaymath}
	then for any $p\geq 1$, $\frac{d\P_0}{d\P^*}\in L^p(P^*)$. Indeed, note that for any $p\geq 1$, $\xi$ is bounded, where
	\begin{displaymath}
		\xi=\exp\left(\int_0^T \frac{1}{2}(p^2-3p)\zeta_t^2dt\right).
	\end{displaymath}
	It is easy to check that
	\begin{align*}
		\E^*\left[\left(\frac{d\P_0}{d\P^*}\right)^p\right]=\E\left[\left(\frac{d\P^*}{d\P_0}\right)\left(\frac{d\P_0}{d\P^*}\right)^p\right]
=\E\left[\xi\exp\left(\int_0^T (1-p)\zeta_tdB_t-\frac{1}{2}\int_0^T (1-p)^2\zeta_t^2dt\right)\right]<\infty.
		%\leq &C\left(\E\left[\exp\left(\int_0^T 2(1-p)\theta_tdB_t-\frac{1}{2}\int_0^T 4(1-p)^2\theta_t^2dt\right)\right]\right)^{\frac{1}{2}}<\infty.
	\end{align*}
 \fi

\item[(ii)] If $\{N_t\}_{t\in[0,T]}$ is a Poisson process with intensity $\lambda$, for $A\in \mathfrak{B}(\mathbb{R}_*)$, 
\begin{align*}
    v(t,A):=\sum_{0<s\leq t}I_{A}(\Delta N_s)
\end{align*}
defines a Poisson random measure.
In this case, $v(dt,dx)=dN_t\delta_{1}(x)$ and $\vartheta(dx)=\lambda\delta_{1}(x)$, where $\delta_{1}$ is the Dirac measure. For $\theta\neq 0$, if the density process takes the following form
	\begin{displaymath}
		\frac{d\P^*}{d\P_0}=\exp\left(\theta N_T-\lambda(e^\theta-1) T \right),
	\end{displaymath}
	then for any $p\geq 1$, $\frac{d\P_0}{d\P^*}\in L^p(\P^*)$. Actually, simple calculation yields that 
 \begin{align*}
		&\E^*\left[\left(\frac{d\P_0}{d\P^*}\right)^p\right]=\E\left[\left(\frac{d\P^*}{d\P_0}\right)\left(\frac{d\P_0}{d\P^*}\right)^p\right]=\E\left[\exp\left(\theta(1-p) N_T-\lambda(e^\theta-1)(1-p) T\right)\right]\\
  =&\E\left[\exp\left(\theta(1-p) N_T-\lambda(e^{\theta(1-p)}-1)T+\lambda(e^{\theta(1-p)}-1)T-\lambda(e^\theta-1)(1-p) T\right)\right]<\infty.
	\end{align*}
 \end{description}
\end{example}

\begin{remark}\label{BSDEforU}
    The utility $U^C$ induced by \eqref{e3} can be viewed as the first component of a backward stochastic differential equation (BSDE) with jumps taking the following form
    \begin{equation}\label{BSDE}
        U_t^C=\int_t^T f(s,Y^C_s,U^C_s)ds-\int_t^T \bar{Z}_s dB_s-\int_t^T\int_{\mathbb{R}_*}\bar{\Psi}_s(x)\tilde{v}(dt,dx).
    \end{equation}
    In the setting of BSDE with jumps, the function $f$ is usually called a generator, which can also depend on $\bar{Z}$ and $\bar{\Psi}$. The solution to BSDE \eqref{BSDE} is a triple of processes $(U^C,\bar{Z},\bar{\Psi})\in \mathbb{S}^2\times \mathbb{H}^2\times \mathbb{H}^2_v$, where
    \begin{itemize}
        \item $\mathbb{S}^2$ is the space of adapted, c\`{a}dl\`{a}g processes $\zeta:\Omega\times[0,T]\rightarrow \mathbb{R}$ satisfying
    \begin{align*}
        \E\left[\sup_{t\in[0,T]}|\zeta_t|^2\right]<\infty;
    \end{align*}
    \item $\mathbb{H}^2$ is the space of predictable processes $Z:\Omega\times[0,T]\rightarrow \mathbb{R}$ satisfying
    \begin{align*}
        \E\left[\int_0^T |Z_t|^2dt\right]<\infty;
    \end{align*}
    \item $\mathbb{H}^2_v$ is the space of predictable processes $\Psi:\Omega\times[0,T]\times \mathbb{R}\rightarrow \mathbb{R}$ satisfying
    \begin{align*}
        \E\left[\int_0^T \int_{\mathbb{R}_*}|\Psi_t(x)|^2\vartheta(dx)dt\right]<\infty.
    \end{align*}
    \end{itemize}
    For more details about BSDE driven by Brownian motion and Poisson random measure, we refer to \cite{BBP, Delong,Royer,TL} and the references therein.
\end{remark}

Stochastic differential utility is well understood when consumption occurs in rates, see, e.g. \cite{DE} and 
\cite{KS}. In the following, we show the established  results carry over when the utility is induced by the level of satisfaction instead of the consumption rate.

\begin{theorem}\label{propertyofutility}
    Under Assumption \ref{a1}, for any $w'>0$ and $C\in \mathcal{A}(w')$, there exists a unique solution $(U^C,\bar{Z},\bar{\Psi})\in \mathbb{S}^2\times \mathbb{H}^2\times \mathbb{H}^2_v$ to \eqref{BSDE}.
    
    The   following properties hold true.
    \begin{itemize}
        \item [1.] Suppose that $\{C^n\}_{n=1}^\infty\subset \mathcal{A}(w')$ converges almost surely to $C\in\mathcal{A}(w')$ in the weak topology of measures on $[0,T]$. Let $Y$ (resp. $Y^n$) and $U$ (resp. $U^n$) be the level of satisfaction and utility induced by $C$ (resp. $C^n$). Then, we have $U^n_t\rightarrow U_t$, $t\in[0,T]$.
        \item [2.] The utility is (strictly) concave in $C$. Mathematically, let $C^i\in \mathcal{A}(w')$, $i=1,2$. For any $\lambda\in(0,1)$, set $C=\lambda C^1+(1-\lambda)C^2$. Let $U$ and $U^i$ be the utility correspond to $C$ and $C^i$, $i=1,2$, respectively. Then,  we have $U_t\geq \lambda U^1_t+(1-\lambda)U^2_t$, $t\in[0,T]$. Moreover, if $C^1$ and $C^2$ are not indistinguishable, we have $U_0> \lambda U^1_0+(1-\lambda)U^2_0$. 
        \item[3.] The utility is monotone in $C$. That is, let $C^i\in \mathcal{A}(w')$, $i=1,2$ and $Y^i$ be the utility correspond to $C^i$ such that $Y^1_t\leq Y^2_t$, $t\in[0,T]$. Then, we have $U^1_t\leq U^2_t$, $t\in[0,T]$.
    \end{itemize}
\end{theorem}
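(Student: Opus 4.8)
The plan is to read \eqref{BSDE} as a BSDE with jumps whose generator $(s,u)\mapsto f(s,Y^C_s,u)$ is uniformly Lipschitz in $u$ by Assumption~\ref{a1}(ii); existence and uniqueness then reduce to an integrability check on the data, and the three qualitative properties follow from stability and comparison arguments.

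For existence and uniqueness the only delicate point is that the zero-order term $f(\cdot,Y^C_\cdot,0)$ lies in the right $L^2$-space, which by Assumption~\ref{a1}(iii) and the continuity (hence boundedness) of $\eta,\theta$ on $[0,T]^2$ amounts to $\E[C_T^{2\alpha}]<\infty$, where $C_T$ is the total consumption; here I use $Y^C_s\le\|\eta\|_\infty+\|\theta\|_\infty C_T$ together with subadditivity of $x\mapsto x^{2\alpha}$. To convert the budget bound (stated under the measures in $\mathcal P$) into an estimate under $\P_0$, I would take $\P^*$ from Assumption~\ref{a1}(iv) and write
\[
\E[C_T^{2\alpha}]=\E^*\!\left[\frac{d\P_0}{d\P^*}\,C_T^{2\alpha}\right]\le\left(\E^*\!\left[\Big(\tfrac{d\P_0}{d\P^*}\Big)^q\right]\right)^{1/q}\left(\E^*\big[C_T^{2\alpha q'}\big]\right)^{1/q'},
\]
with $q=1/(1-2\alpha p)$ as in Assumption~\ref{a1}(iv) and conjugate exponent $q'=1/(2\alpha p)$, so that $2\alpha q'=1/p<1$ and Jensen gives $\E^*[C_T^{1/p}]\le(\E^*[C_T])^{1/p}$. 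The first factor is finite by Assumption~\ref{a1}(iv), and the budget constraint with $\gamma_t\ge e^{-\|r\|_\infty T}$ bounds $\E^*[C_T]$ by $e^{\|r\|_\infty T}w'$. With this integrability the classical theory of Lipschitz BSDEs driven by $B$ and $\tilde v$ (\cite{BBP,Delong,Royer,TL}) provides the unique $(U^C,\bar Z,\bar\Psi)\in\mathbb S^2\times\mathbb H^2\times\mathbb H^2_v$.

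For Property~1, I would first note that $Y^n_t\to Y_t$ almost surely at every continuity point $t$ of $C$, hence off the countable atom set of $C$: since $s\mapsto\theta_{t,s}$ is continuous, weak convergence of $dC^n$ to $dC$ yields $\int_{[0,t]}\theta_{t,s}\,dC^n_s\to\int_{[0,t]}\theta_{t,s}\,dC_s$ whenever $C$ has no atom at $t$ (and $C^n_T\to C_T$ on testing against the constant $1$). I would then use the a priori stability estimate for the BSDE: splitting the generator difference as $[f(s,Y^n_s,U^n_s)-f(s,Y^n_s,U_s)]+[f(s,Y^n_s,U_s)-f(s,Y_s,U_s)]$ and absorbing the Lipschitz first bracket, one gets $\E[\sup_t|U^n_t-U_t|^2]\le c\,\E[(\int_0^T|f(s,Y^n_s,U_s)-f(s,Y_s,U_s)|\,ds)^2]$. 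The right-hand side tends to $0$ by a Vitali argument: the integrand converges to $0$ a.e.\ by continuity of $f$ in $y$, and $\{|Y^n_s|^{2\alpha}\}_n$ is uniformly integrable over $\Omega\times[0,T]$ because the uniform budget bound $\sup_n\E^*[C^n_T]\le e^{\|r\|_\infty T}w'$ and the slack in $2\alpha p<1$ give a uniform $L^{1+\varepsilon}$-bound via the Hölder/Jensen computation above. Hence $U^n_t\to U_t$.

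Properties~2 and~3 are comparison arguments. As $Y^C$ is affine in $C$, for $C=\lambda C^1+(1-\lambda)C^2$ one has $Y^C=\lambda Y^1+(1-\lambda)Y^2$, and concavity of $f$ in $(y,u)$ gives $\lambda f(s,Y^1_s,U^1_s)+(1-\lambda)f(s,Y^2_s,U^2_s)\le f(s,Y^C_s,V_s)$ with $V:=\lambda U^1+(1-\lambda)U^2$; since $V$ solves the BSDE with driver equal to the left-hand side and $f$ does not depend on the jump integrand, the comparison theorem for BSDEs with jumps applies directly and yields $V\le U^C$, the claimed concavity. If $C^1,C^2$ are not indistinguishable then $Y^1\ne Y^2$ on a set of positive $\P_0\otimes dt$-measure, so strict concavity of $f$ makes the inequality strict there and the strict comparison theorem upgrades this to $U^C_0>V_0$. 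Property~3 is the same mechanism: $Y^1_t\le Y^2_t$ and monotonicity of $f(s,\cdot,u)$ give $f(s,Y^1_s,u)\le f(s,Y^2_s,u)$ for all $u$, whence comparison gives $U^1\le U^2$. The comparison steps are routine; the main obstacle is rather the first block — using Assumption~\ref{a1}(iii)--(iv) and the budget constraint to secure $L^2(\P_0)$-integrability of a total consumption that need not be $\P_0$-integrable, and, for Property~1, upgrading almost-sure weak convergence of the $C^n$ to $L^2$-convergence of the generator term that drives the stability estimate — both resting on the bookkeeping $2\alpha p<1$ that Assumption~\ref{a1}(iv) supplies.
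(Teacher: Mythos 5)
Your proposal is correct and takes essentially the same route as the paper: existence/uniqueness via Lipschitz (in $u$) BSDE theory after the same H\"older--Jensen integrability check built on Assumption \ref{a1}(iv) and the budget constraint (the paper's Lemma \ref{l1}), Property 1 via the a priori stability estimate combined with a Vitali/uniform-integrability argument fed by uniform moments of $C^n_T$, and Properties 2--3 via the (strict) comparison theorem for BSDEs with jumps, exactly as in the paper's appeal to Theorems 3.1.1 and 3.2.2 of \cite{Delong}. The only cosmetic differences are that you bound $\E[C_T^{2\alpha}]$ directly where the paper's Lemma \ref{l1} bounds $\E[C_T^{2\alpha p}]$, and your $L^1$-in-time form of the stability estimate lets you bypass the paper's auxiliary moment bound $\E[|U^C_t|^{2p}]<\infty$; note also that your one-line passage from $C^1\not\equiv C^2$ to $Y^1\neq Y^2$ on a non-null set relies on the same unelaborated step that the paper delegates to the analysis of Theorem 2.3 in \cite{BR}.
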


\begin{proof}
 By assumption, the processes $r,\eta,\theta$  are bounded by some constant $M$. In this proof, we always denote by $L$ a positive constant depending on $\alpha,p,K,w',T,M$, which may vary from line to line. We first prove the existence and uniqueness of $U^C$ for any given $C\in\mathcal{A}(w')$. By Theorem 3.1.1 in \cite{Delong}, it suffices to prove that for any $C\in\mathcal{A}(w')$, we have 
    \begin{align*}
        \E\left[\int_0^T |f(t,Y^C_t,0)|^2 dt\right]<\infty.
    \end{align*}
By Assumption \ref{a1} and Lemma \ref{l1} (i), we have
    \begin{align*}
        |f(t,Y^C_t,0)|^2\leq L(1+|C_t|^{2\alpha})\leq L(1+|C_T|^{2\alpha}).
    \end{align*}
    By Lemma \ref{l1} (iii), we obtain the desired result. 
    
    We next claim that for any $t\in[0,T]$,  we have
		\begin{equation}\label{claim1}
	 \E\left[|U_t^C|^{2p}\right]<\infty,
		\end{equation}
   where $p>1$ is given in Assumption \ref{a1} (iv). A simple calculation yields that
	\begin{align*}
		|U_t^C|^{2p}&\leq L\left\{\E_t\left[\bigg|\int_t^T f(s,Y_s^C,0)ds\bigg|^{2p}\right]+\E_t\left[\bigg|\int_t^T (f(s,Y_s^C,0)-f(s,Y_s^C,U_s^C))ds\bigg|^{2p}\right]\right\}\\
		&\leq L \E_t\left[\int_t^T \left(1+|Y_s^C|^{2\alpha p}\right)ds\right]+L \E_t\left[\int_t^T |U_s^C|^{2p} ds\right]\\
		&\leq L \E_t\left[1+|C_T|^{2\alpha p}\right]+L \int_t^T\E_t\left[|U_s^C|^{2p}\right]ds.
	\end{align*}
  Taking expectations on both sides, we obtain that
	\begin{align*}\label{e2}
		\E[|U_t^C|^{2p}]\leq L \E[1+|C_T|^{2\alpha p}]+L\int_t^T \E[|U_s^C|^{2p}]ds.
	\end{align*}
	 Applying the Gronwall inequality, the estimate \eqref{claim1} follows.

   In the following, we show the continuity of the utility function.  By Lemma 3.1.1 in \cite{Delong}, we have 
    \begin{align*}
        \E\left[\sup_{t\in[0,T]}|U_t-U^n_t|^2\right]\leq L\E\left[\int_0^T|f(t,Y_t,U_t)-f(t,Y^n_t,U_t)|^2dt\right].
    \end{align*}
    Set $\xi^n=\int_0^T|f(t,Y_t,U_t)-f(t,Y^n_t,U_t)|^2dt$. It is easy to check that
    \begin{displaymath}
	|\xi_n|^p\leq L\int_0^T (1+|U_t|^{2p}+|Y_t^n|^{2\alpha p}+|Y_t|^{2\alpha p})dt\leq L\int_0^T (1+|U_t|^{2p}+|{{C}^n_t}|^{2\alpha p}+|{C_t}|^{2\alpha p})dt.
	\end{displaymath}
 By Lemma \ref{l1} (iii) and \eqref{claim1}, for any $n=1,2,\cdots$, we have $\E[|\xi^n|^p]<\infty$, which implies that $\{\xi^n\}_{n=1}^\infty$ is uniformly integrable. By Lemma \ref{l1} (ii), $\xi^n\rightarrow 0$ as $n$ goes to infinity. The above analysis indicates that 
 \begin{align*}
     \lim_{n\rightarrow \infty}\E\left[\sup_{t\in[0,T]}|U_t-U^n_t|^2\right]\leq \lim_{n\rightarrow \infty}L\E[\xi^n]=0.
 \end{align*}

 Then, we prove the concavity of the utility function. Let $Y$ and $Y^i$ be the level of satisfaction correspond to $C$ and $C^i$, $i=1,2$, respectively. Since $Y=\lambda Y^1+(1-\lambda)Y^2$, $U$ can be seen as the solution to the BSDE with generator $g$, where
 \begin{align*}
     g(s,u)=f(s,\lambda Y^1_s+(1-\lambda)Y^2_s,u).
 \end{align*}
 Set $\tilde{U}=\lambda U^1+(1-\lambda)U^2$. It is easy to check that $\tilde{U}$ can be regarded as the solution to the BSDE with generator $\tilde{g}$, where 
 \begin{align*}
     \tilde{g}(s,u)=&f(s,\lambda Y^1_s+(1-\lambda)Y^2_s,u)+\lambda f(s,Y^1_s,U^1_s)\\
     &+(1-\lambda)f(s,Y^2_s,U^2_s)-f(s,\lambda Y^1_s+(1-\lambda)Y^2_s,\lambda U^1_s+(1-\lambda)U^2_s).
 \end{align*} 
 Noting that $g\geq \tilde{g}$, by Theorem 3.2.2   in \cite{Delong}, we have $U_t\geq \tilde{U}_t=\lambda U^1_t+(1-\lambda)U^2_t$. 
 
 For the strict concavity, suppose otherwise that $U_0=\tilde{U}_0$. It follows from Theorem 3.2.2 in \cite{Delong} that $U_t=\tilde{U}_t(=\lambda U^1+(1-\lambda)U^2)$, $t\in[0,T]$. Taking derivatives and comparing the $dt$-term, we obtain that
 \begin{equation}\label{contradiction}
     f(t,\lambda Y^1_t+(1-\lambda)Y^2_t,U_t)=\lambda f(t,Y^1_t+U^1_t)+(1-\lambda)f(t,Y^2_t,U^2_t), \ \forall t\in[0,T].
 \end{equation}
However, since $C^1$ and $C^2$ are not indistinguishable, by a similar analysis as the proof of Theorem 2.3 in \cite{BR}, $Y^1$ and $Y^2$ differ on an open time interval on a set with positive probability, which contradicts \eqref{contradiction} due to the strict concavity of $f$. 

The monotonicity of the utility is a direct consequence of Theorem 3.2.2 in \cite{Delong}. 
\end{proof}

\begin{remark}
    An inspection of the proof of Theorem \ref{propertyofutility} shows that  existence and uniqueness as well as the continuity property for the recursive utility still hold if we drop condition (i) in Assumption \ref{a1}. Strict concavity does not require that $f$ be strictly increasing in $y$-component. The strict  monotonicity  is used below   to ensure that the agent's utility function is nonsatiated, which implies that the agent will always exhaust his budget.
\end{remark}

\section{The Optimal Consumption Problem: Well--Posedness and First-Order Conditions}

In the remainder of the paper, we study optimal consumption choice for intertemporal preferences that are both recursive and account for local substitution, i.e.
\begin{equation}\label{e1}
v(w)=\sup_{C\in\mathcal{A}(w)}U_0^C.%=\sup_{C\in\mathcal{A}_g(\omega)}\mathcal{E}^{g}[U(C)].
\end{equation}
We first show that the problem is well-posed.

    \begin{theorem}\label{t1}
	Under Assumption \ref{a1}, the utility maximization problem \eqref{e1} has a solution. Moreover, if  $C\rightarrow Y^C$ is injective up to $P_0$-indistinguishability, the solution is unique.
\end{theorem}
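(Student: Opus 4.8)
The plan is to establish existence by compactifying a maximizing sequence via a Komlós-type argument, exploiting the concavity and continuity recorded in Theorem \ref{propertyofutility}, and then to read off uniqueness from strict concavity. For existence I would first check that $v(w)$ is finite: the null plan $C\equiv 0$ is feasible and gives $U_0^0>-\infty$, while for the upper bound the budget constraint together with Assumption \ref{a1}(iv) and the lower bound $\gamma_t\geq e^{-MT}$ (from boundedness of $r$) controls $\E^*[C_T]$ under the reference measure $\P^*$; the growth condition in Assumption \ref{a1}(iii) then bounds $U_0^C$ uniformly over $\mathcal{A}(w)$ exactly as in the a priori estimate \eqref{claim1}. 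Fix a maximizing sequence $\{C^n\}\subset\mathcal{A}(w)$ with $U_0^{C^n}\to v(w)$. Since $\mathcal{A}(w)$ is convex but not compact, I cannot extract a convergent subsequence directly; instead I would invoke the infinite-dimensional Komlós theorem (as used in \cite{K99,K67,BR}): because $\sup_n\E^*[C^n_T]<\infty$, there is a subsequence whose Cesàro means $\tilde C^N=\frac1N\sum_{k=1}^N C^{n_k}$ converge, $\P_0$-almost surely, to some $C$ in the weak topology of measures on $[0,T]$. Each $\tilde C^N$ lies in $\mathcal{A}(w)$ by convexity of the budget set, and the concavity property of Theorem \ref{propertyofutility} gives $U_0^{\tilde C^N}\geq\frac1N\sum_{k=1}^N U_0^{C^{n_k}}\to v(w)$, so $\{\tilde C^N\}$ is again a maximizing sequence.

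It then remains to show that the limit $C$ is feasible and optimal. As $\gamma_{\cdot}(\omega)$ is continuous and bounded on the compact interval $[0,T]$, weak convergence yields $\int_0^T\gamma_t\,d\tilde C^N_t\to\int_0^T\gamma_t\,dC_t$ pathwise, and Fatou's lemma under each $\P^*\in\mathcal{P}$ gives $\E^*[\int_0^T\gamma_t\,dC_t]\leq\liminf_N\E^*[\int_0^T\gamma_t\,d\tilde C^N_t]\leq w$, so $C\in\mathcal{A}(w)$. With feasibility in hand the continuity statement of Theorem \ref{propertyofutility} applies with $w'=w$ and yields $U_0^{\tilde C^N}\to U_0^{C}$, whence $U_0^C=v(w)$ and $C$ is optimal. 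I expect the delicate points to be the $L^1$-type bound needed to apply Komlós and the closedness (lower semicontinuity) of the budget constraint under weak convergence of measures, in particular ruling out the loss of mass at the endpoints of $[0,T]$; compactness of $[0,T]$ and continuity of $\gamma$ are what make the latter work.

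Finally, for uniqueness suppose $C^1,C^2\in\mathcal{A}(w)$ are both optimal and not indistinguishable. If $C\mapsto Y^C$ is injective up to $\P_0$-indistinguishability, then $Y^1$ and $Y^2$ differ on a set of positive probability, so the convex combination $C=\frac12(C^1+C^2)\in\mathcal{A}(w)$ satisfies, by the strict concavity in Theorem \ref{propertyofutility}, $U_0^{C}>\frac12U_0^{C^1}+\frac12U_0^{C^2}=v(w)$, contradicting the optimality of $C^1,C^2$. Hence the optimal consumption plan is unique up to $\P_0$-indistinguishability.
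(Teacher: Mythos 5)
Your proposal is correct and follows essentially the same route as the paper's own proof: a Koml\'{o}s/Kabanov Ces\`{a}ro-mean argument on a maximizing sequence, feasibility of the limit via continuity of $\gamma$ and Fatou's lemma, optimality via the continuity and concavity properties of Theorem \ref{propertyofutility}, and uniqueness via strict concavity applied to the midpoint of two putative optima. The only additions (the finiteness check for $v(w)$ and the explicit use of injectivity to get $Y^1\neq Y^2$ before invoking strict concavity) are harmless refinements of the same argument.
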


 Note that the mapping $C\rightarrow Y^C$ is injective 
	if the function $\theta$ is separable, i.e., there exist two strictly positive, continuous functions $\theta^1,\theta^2:[0,T]\times\mathbb{R}$ such that $\theta_{t,s}=\theta^1_t \theta^2_s$.

\begin{proof}
Uniqueness follows from the strict concavity of the utility function. In fact, suppose that $C^i$ are optimal for problem \eqref{e1}, $i=1,2$ and they are not indistinguishable. Set $C=\frac{1}{2}(C^1+C^2)$. It is easy to check that $C\in\mathcal{A}(w)$. However, we have 
\begin{align*}
    U^C_0>\frac{1}{2}(U_0^{C^1}+U_0^{C^2})=\sup_{C\in\mathcal{A}(w)}U_0^C,
\end{align*}
which is a contradiction.
%	We first prove the uniqueness. Suppose that $C^i$ are optimal to problem \eqref{e1} and they are not indistinguishable,  then the level of satisfaction $Y^i=Y^{C^i}$ are not indistinguishable, $i=1,2$. Set $C=\frac{1}{2}(C^1+C^2)$ and $Y=Y^C$. It is easy to check that $C\in\mathcal{A}(w)$ and $Y=\frac{1}{2}(Y^1+Y^2)$. By Proposition \ref{p1}, we have $U_t^C\geq \frac{1}{2}(U_t^{C^1}+U_t^{C^2})$. Similar analysis as the proof of Theorem 2.3 in \cite{BR}, $Y^1$ and $Y^2$ differ on an open time interval on a set with positive probability. By the strict concavity and the increasing property of $f(t,\cdot,\cdot)$, we have
%	\begin{align*}
%	U_0^C&=\E\left[\int_0^T f(t,Y_t,U_t^C)dt\right]\\
%	&>\E\left[\int_0^T \frac{1}{2}(f(t,Y^1_t,U_t^{C^1})+f(t,Y^2_t,U_t^{C^2}))dt\right]\\
%	&= \frac{1}{2}\left(\E\left[\int_0^T f(t,Y^1_t,U_t^{C^1})dt\right]+\E\left[\int_0^T f(t,Y^2_t,U_t^{C^2})dt\right]\right)\\
%	&=\frac{1}{2}(U_0^{C^1}+U_0^{C^2})=\sup_{C\in\mathcal{A}(w)}U_0^C,
%	\end{align*}
%	which is a contradiction.
	
	We are now in a position to show existence. Choose a maximizing sequence $\{C^n\}_{n=1}^\infty\subset \mathcal{A}(w)$ such that
	\begin{displaymath}
	\sup_{C\in\mathcal{A}(w)}U_0^C=\lim_{n\rightarrow\infty}U_0^{C^n}.
	\end{displaymath}
	By Lemma \ref{l1} (iii) and Kabanov's version of Koml\'{o}s' theorem (see \cite{K99}), there exists a subsequence, for simplicity, still denoted by $\{C^n\}_{n=1}^\infty$, such that
	\begin{equation}\label{convergene}
	\widetilde{C}^n_t:=\frac{1}{n}\sum_{k=1}^{n}C^k_t\rightarrow C^*_t, \ \textrm{as }n\rightarrow\infty
	\end{equation}
	for $t=T$ and for every point of continuity $t$ of $C^*$. We claim that $\{\widetilde{C}^n\}_{n=1}^\infty$ is also a maximizing sequence for problem \eqref{e1}. First, it is easy to check that $\widetilde{C}^n\in \mathcal{A}(w)$, for any $n\in\mathbb{N}$. Consequently, we have $U^{\widetilde{C}^n}_0\leq \sup_{C\in\mathcal{A}(w)}U^C_0$. By Theorem \ref{propertyofutility}, we obtain that %Indeed, the convexity of $\widetilde{\mathcal{E}}^g[\cdot]$ implies that $\widetilde{C}^n\in \mathcal{A}(\omega)$, for any $n\in\mathbb{N}$. Therefore, we have $V(\widetilde{C}^n)\leq \sup_{C\in\mathcal{A}(\omega)}V(C)$. On the other hand, it is easy to check that
	\begin{displaymath}
	U_0^{\widetilde{C}^n}\geq \frac{1}{n}\sum_{k=1}^{n}U_0^{C^k}.
	\end{displaymath}
	It follows that
	\begin{displaymath}
		\liminf_{n\rightarrow\infty}U_0^{\widetilde{C}^n}\geq \liminf_{n\rightarrow\infty}\frac{1}{n}\sum_{k=1}^{n}U_0^{C^k}=\sup_{C\in\mathcal{A}(w)}U_0^C.
	\end{displaymath}
	Hence, $\{\widetilde{C}^n\}_{n=1}^\infty$ is a maximizing sequence for problem \eqref{e1}. We then show that ${C}^*$ is optimal  for problem \eqref{e1}. Note that $\gamma$ is continuous, which implies that
	\begin{displaymath}
		\lim_{n\rightarrow\infty} \int_0^T \gamma_td\widetilde{C}^n_t=\int_0^T \gamma_td{C}^*_t.
	\end{displaymath}
	By Fatou's lemma, we have for any $\P^*\in\mathcal{P}$,
	\begin{displaymath}
		\E^*\left[\int_0^T \gamma_t dC^*_t\right]\leq \liminf_{n\rightarrow\infty} \E^*\left[\int_0^T \gamma_td\widetilde{C}^n_t\right]\leq w,
	\end{displaymath}
	which indicates that $C^*\in\mathcal{A}(w)$. Recalling \eqref{convergene} and using Theorem \ref{propertyofutility} again, we have
\begin{displaymath}
		U^{C^*}_0=\lim_{n\rightarrow\infty} U^{\widetilde{C}^n}_0=\sup_{C\in\mathcal{A}(w)}U_0^C.
	\end{displaymath}
 Thus, $C^*$ is an optimal budget-feasible consumption plan.
 \iffalse By Assumption \ref{a1}, we obtain that
	\begin{displaymath}
		\xi_n:=\int_0^T |f(t,Y^{\widetilde{C}^n}_t,U^{C^*}_t)-f(t,Y^{C^*}_t,U^{C^*}_t)|dt\rightarrow 0, \P_0\textrm{-}a.s.
	\end{displaymath}
	We claim that $\{\xi^n,n\in\mathbb{N}\}$ is uniformly integrable. Indeed, it is easy to check that for any $p>1$ with $\alpha p<1$, we have
	\begin{displaymath}
	|\xi_n|^p\leq L\int_0^T (1+|U_t^{C^*}|^p+|Y_t^{\widetilde{C}^n}|^{\alpha p}+|Y_t^{C^*}|^{\alpha p})dt\leq L\int_0^T (1+|U_t^{C^*}|^p+|{\widetilde{C}^n_t}|^{\alpha p}+|{C^*_t}|^{\alpha p})dt.
	\end{displaymath}
	By Lemma \ref{l1}, we get that $\E[|\xi^n|^p]<\infty$. Hence, the family $\{\xi^n,n\in\mathbb{N}\}$ is uniformly integrable, together with Proposition \ref{p1}, which yields that
	\begin{displaymath}
		U^{C^*}_0=\lim_{n\rightarrow\infty} U^{\widetilde{C}^n}_0.
	\end{displaymath}
	Thus, $C^*$ is an optimal budget-feasible consumption plan.
 \fi
\end{proof}

%\section{First-order conditions for optimality}	

In the next step, we establish a version of the Kuhn-Tucker theorem for our utility maximization problem \eqref{e1} in complete markets.  The main mathematical tool to prove the first-order conditions is the stochastic Gronwall inequality (see Proposition B.1 in \cite{KSS}).  

\begin{assumption}\label{a2}
	The financial market is complete in the sense that $\mathcal{P}$ is a singleton. We denote this probability measure by $\P^*$, and impose Assumption \ref{a1} (iv).
\end{assumption}
	
We write 
\begin{displaymath}
	\psi_t=\gamma_t \frac{d\P^*}{d\P_0}\Big|_{\mathcal{F}_t}, \ t\in[0,T],
\end{displaymath}	
for the state-price density 
and
\begin{equation}\label{nablaV}
	\nabla V(C)(t)=\E_t\left[\int_t^T \exp\left(\int_0^s \partial_u f(r,Y_r^C,U_r^C)dr\right)\partial_y f(s,Y_s^C,U_s^C)\theta_{s,t}ds\right].
\end{equation}
 for the marginal   utility that is derived from an additional infinitesimal consumption at time $t$, otherwise following the consumption $C$. From the mathematical point of view, $\nabla V(C)$ is the Riesz representation of the utility gradient at $C$.

\begin{remark}\label{r2}
\begin{description}
    \item[(i)] By Theorem (1.33) in Jacod \cite{J79}, for any $C'\in\mathcal{X}$, we have the following identity
	\begin{displaymath}
	     \E\left[\int_0^T \nabla V(C)(t)dC'_t\right]=\E\left[\int_0^T \Phi(t)dC'_t\right],
	\end{displaymath}
	where $\Phi$ is the nonnegative, product-measurable process
	\begin{displaymath}
		\Phi(t,\omega)=\int_t^T \exp\left(\int_0^s \partial_u f(r,Y_r^{C(\omega)},U_r^C(\omega))dr\right)\partial_y f(s,Y_s^{C(\omega)},U_s^C(\omega))\theta_{s,t}ds.
	\end{displaymath}
 \item[(ii)] Let  $f(t,y,u)=g(t,y)-\delta u$, where $\delta$ is a constant and $g$ is a continuous function which is increasing and concave in its second argument. Then  the solution to Eq. \eqref{e3} is given by 
	\begin{displaymath}
		U_t^C=\E_t\left[\int_t^T e^{-\delta(s-t)} g(s, Y_s^C)ds\right].
	\end{displaymath}
	In this case, the problem \eqref{e1} degenerates to the separable utility maximization problem studied in \cite{BR}, and we have
 \begin{displaymath}
	\nabla V(C)(t)=\E_t\left[\int_t^T e^{-\delta s}\partial_y g(s,Y_s^C)\theta_{s,t}ds\right],
\end{displaymath}
which coincides with Equation (5) in \cite{BR}.
\end{description}	
	%Hence, $\nabla V(C)$ can also be defined as the optional projection of $\Phi$.
\end{remark}

\begin{theorem}\label{t2}
	Under Assumption \ref{a1} and Assumption \ref{a2}, a consumption plan $C^*\in\mathcal{X}$ solves the utility maximization problem \eqref{e1} if and only if the following conditions hold true for some finite Lagrange multiplier $M>0$:
	\begin{description}
		\item[(i)] $\E\left[\int_0^T \psi_t dC^*_t\right]=w$.
		\item[(ii)] $\nabla V(C^*)(t)\leq M\psi_t$ for any $t\in[0,T]$, $\P_0$-a.s.
		\item[(iii)] $\E\left[\int_0^T (\nabla V(C^*)(t)-M\psi_t)dC^*_t\right]=0$, which means that, for almost all $\omega\in\Omega$, $C^*(\omega)$ is flat off the set
		\begin{displaymath}
			\{t\in[0,T]|\nabla V(C^*)(t,\omega)=M\psi_t(\omega)\}.
		\end{displaymath}
	\end{description}
\end{theorem}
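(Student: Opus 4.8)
The plan is to derive both implications from the concavity of $C\mapsto U_0^C$ (Theorem \ref{propertyofutility}) together with the linearity of the budget functional $C\mapsto \E[\int_0^T\psi_t\,dC_t]$, so that \eqref{e1} becomes a concave program over the convex cone $\mathcal{X}$ intersected with a single budget half-space, and to read off the Kuhn--Tucker multiplier $M$ explicitly. The analytic core, on which both directions rest, is a directional-derivative formula: for $C,C'\in\mathcal{A}(w)$ and the convex combination $C^\epsilon=(1-\epsilon)C+\epsilon C'$,
\[
\lim_{\epsilon\downarrow 0}\frac{U_0^{C^\epsilon}-U_0^C}{\epsilon}=\E\left[\int_0^T \nabla V(C)(t)\,d(C'-C)_t\right].
\]
I would establish this identity first.

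To prove the formula, observe that $Y^{C^\epsilon}=(1-\epsilon)Y^C+\epsilon Y^{C'}$ is affine in $\epsilon$, and set $V^\epsilon=\epsilon^{-1}(U^{C^\epsilon}-U^C)$. Subtracting the two copies of \eqref{BSDE} and applying the mean value theorem to $f$ in its last two arguments, $V^\epsilon$ solves a linear BSDE with driver $a^\epsilon_s V^\epsilon_s+b^\epsilon_s(Y^{C'}_s-Y^C_s)$, where $a^\epsilon$ is bounded by $K$ (Assumption \ref{a1}(ii)) and $b^\epsilon\ge 0$; as $\epsilon\downarrow 0$ these coefficients converge to $\partial_u f(s,Y^C_s,U^C_s)$ and $\partial_y f(s,Y^C_s,U^C_s)$. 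The limiting linear BSDE has the integrating-factor solution, so at $t=0$ its value is $\E[\int_0^T \exp(\int_0^s\partial_u f\,dr)\,\partial_y f(s,Y^C_s,U^C_s)\,(Y^{C'}_s-Y^C_s)\,ds]$; a Fubini interchange on the kernel $\theta_{s,t}$, combined with Remark \ref{r2}(i) and the moment bounds from the proof of Theorem \ref{propertyofutility}, rewrites this as $\E[\int_0^T\nabla V(C)(t)\,d(C'-C)_t]$.

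The main obstacle is the limit $V^\epsilon\to\dot U$. Since $\partial_y f(s,Y_s,U_s)$ is controlled only through the polynomial growth of Assumption \ref{a1}(iii) and the $L^{2p}$ moments in \eqref{claim1}, rather than being uniformly bounded, the coefficients of the linear BSDE are genuinely random and merely integrable, and the deterministic Gronwall lemma does not close the estimate. Here I would invoke the stochastic Gronwall inequality (Proposition B.1 in \cite{KSS}) to bound $\sup_t|V^\epsilon_t-\dot U_t|$ in the relevant moment and pass to the limit; this is exactly the step that upgrades the separable argument of \cite{BR} to the present recursive setting.

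Granting the formula, the two implications are short. For sufficiency, concavity yields the supergradient inequality $U_0^C-U_0^{C^*}\le \E[\int_0^T\nabla V(C^*)(t)\,d(C-C^*)_t]$ for every $C\in\mathcal{A}(w)$; bounding the $dC$-term by (ii) and the budget $\E[\int_0^T\psi\,dC]\le w$, and evaluating the $dC^*$-term by (i) and (iii), gives $U_0^C-U_0^{C^*}\le Mw-Mw=0$. For necessity, optimality over the convex set $\mathcal{A}(w)$ forces the directional derivative to be nonpositive in every feasible direction, i.e. $\E[\int_0^T\nabla V(C^*)\,dC]\le \E[\int_0^T\nabla V(C^*)\,dC^*]=:m$ for all $C\in\mathcal{A}(w)$. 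Strict monotonicity of $f$ in $y$ (Assumption \ref{a1}(i)) makes the utility nonsatiated, so the budget binds, giving (i) and $\E[\int_0^T\psi\,dC^*]=w$; put $M=m/w$, which is strictly positive since $\nabla V(C^*)>0$ and $C^*\not\equiv 0$. Then (ii) follows by a bang-per-buck contradiction: were $\nabla V(C^*)(t)>M\psi_t$ on a set of positive $dt\times\P_0$-measure, reallocating all budget onto that set would produce a feasible $C$ with $\E[\int_0^T\nabla V(C^*)\,dC]>m$, violating the variational inequality. Finally (iii) is arithmetic, $\E[\int_0^T(\nabla V(C^*)-M\psi)\,dC^*]=m-Mw=0$, and as the integrand is nonpositive by (ii) while $dC^*\ge 0$ it must vanish $dC^*$-a.e., which is the asserted flatness.
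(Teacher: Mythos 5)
Your overall architecture mirrors the paper's: linearize the recursive utility along the convex perturbation $C^\epsilon$, control the linearization with the stochastic Gronwall inequality of \cite{KSS}, reduce necessity to optimality for the linear problem $\sup_{C\in\mathcal{A}(w)}\E\bigl[\int_0^T\nabla V(C^*)(t)\,dC_t\bigr]$, and then extract the multiplier (the paper does this last step by citing Lemma 3.4 of \cite{BR}; your bang-per-buck argument is a correct explicit version of it, modulo an appeal to the optional section theorem to turn ``a set of positive $dt\times\P_0$-measure'' into a stopping time). The gap lies in your central claim, the two-sided Gateaux derivative identity, and in the limit passage you propose for it. Assumption \ref{a1} controls the growth of $f$ but puts no bound whatsoever on $\partial_y f$: the marginal felicity may blow up as the satisfaction level approaches zero (power-type aggregators with $\eta=0$ are the motivating examples), in which case both the difference quotient and the right-hand side $\E\bigl[\int_0^T\nabla V(C)(t)\,d(C'-C)_t\bigr]$ can equal $+\infty$, the ``limiting linear BSDE'' has a non-integrable driver, and neither standard BSDE theory nor Proposition B.1 of \cite{KSS} applies. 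Even when $\partial_y f$ stays finite, your mean-value coefficients $b^\epsilon_s$ are only pointwise convergent; passing from ``coefficients converge'' to convergence of the BSDE values requires an integrable majorant for $b^\epsilon_s\,|Y^{C'}_s-Y^C_s|$, uniformly in $\epsilon$, which your sketch does not supply. Worse, for necessity you apply the identity at $C=C^*$, where finiteness of $\nabla V(C^*)$ is precisely part of what must be proved (it is the content of condition (ii)), so the argument as written is circular.

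The paper is structured exactly to avoid this. For sufficiency it takes no limit at all: concavity of $f$ applied directly to $U^{C^*}-U^{C}$ plus stochastic Gronwall yields the supergradient inequality in one step, and the assumed condition (ii) makes every term finite. For necessity (Lemma \ref{l4}) it proves only the one-sided inequality $\E\bigl[\int_0^T\Phi^\epsilon(t)\,dC^*_t\bigr]\ge\E\bigl[\int_0^T\Phi^\epsilon(t)\,dC_t\bigr]$ at each fixed $\epsilon$, and passes to the limit asymmetrically: Fatou's lemma on the $dC$-side (only $\liminf\ge$ is needed there, since the integrand is nonnegative), and dominated convergence on the $dC^*$-side, where the integrable majorant comes from the concavity trick $\partial_y f\bigl(s,\tfrac12 Y^*_s,U^\epsilon_s\bigr)\,\tfrac12 Y^*_s\le f\bigl(s,\tfrac12 Y^*_s,U^\epsilon_s\bigr)-f\bigl(s,0,U^\epsilon_s\bigr)$ combined with the sandwich $\epsilon U_t+(1-\epsilon)U^*_t\le U^\epsilon_t\le U^{\widetilde C}_t$. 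The point is that the possibly exploding gradient is only ever integrated against $dC^*$, where the factor $Y^*_s$ tames it, and no two-sided derivative formula is ever asserted. If you replace your identity by this pair of one-sided limits, the rest of your proof (the supergradient bound for sufficiency, and your explicit extraction of $M$ with the section theorem) goes through and reproduces the theorem.
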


\begin{proof}
	We first prove the sufficiency. Let $C^*\in\mathcal{X}$ be the consumption plan satisfying conditions (i)-(iii) and $C\in\mathcal{X}$ be another budget-feasible consumption plan. For simplicity, set $Y^*=Y^{C^*}$, $Y=Y^C$, $U^*=U^{C^*}$, $U=U^C$, $\hat{U}=U^*-U$, $\hat{Y}=Y^*-Y$ and $\hat{C}=C^*-C$. By the concavity of the felicity function $f$, it is easy to check that, for any $t\in[0,T)$ and stopping time $\tau$
	\begin{align*}
		\hat{U}_tI_{\{\tau>t\}}=& \E_t\left[I_{\{\tau>t\}}\int_t^\tau \left(f(s,Y_s^*,U_s^*)-f(s,Y_s,U_s)\right)ds+I_{\{\tau>t\}}\hat{U}_\tau\right]\\
		\geq &\E_t\left[I_{\{\tau>t\}}\int_t^\tau \left(\partial_y f(s,Y_s^*,U_s^*)\hat{Y}_s+ \partial_u f(s,Y_s^*,U_s^*)\hat{U}_s\right)ds+I_{\{\tau>t\}}\hat{U}_\tau\right]\\
		=&\E_t\left[I_{\{\tau>t\}}\int_t^\tau \left(\partial_y f(s,Y_s^*,U_s^*)\int_0^s \theta_{s,t}d\hat{C}_t+ \partial_u f(s,Y_s^*,U_s^*)\hat{U}_s\right)ds+I_{\{\tau>t\}}\hat{U}_\tau\right].
	\end{align*}
	By Proposition B.1 in \cite{KSS}, we obtain that
	\begin{displaymath}
		U_0^*-U_0\geq \E\left[\int_0^T \exp\left(\int_0^s \partial_uf(r,Y^*_r,U^*_r)dr\right)\partial_yf(s,Y_s^*,U_s^*)\int_0^s \theta_{s,t}(dC_t^*-dC_t)ds\right].
	\end{displaymath}
	We divide the last expectation into two terms:
	\begin{displaymath}
		I^*=\E\left[\int_0^T \exp\left(\int_0^s \partial_uf(r,Y^*_r,U^*_r)dr\right)\partial_yf(s,Y_s^*,U_s^*)\int_0^s \theta_{s,t}dC_t^*ds\right]
	\end{displaymath}
	and
		\begin{displaymath}
	I=\E\left[\int_0^T \exp\left(\int_0^s \partial_uf(r,Y^*_r,U^*_r)dr\right)\partial_yf(s,Y_s^*,U_s^*)\int_0^s \theta_{s,t}dC_tds\right].
	\end{displaymath}
	By simple calculation, we derive that
	\begin{align*}
		I^*&=\E\left[\int_0^T \int_t^T \exp\left(\int_0^s \partial_uf(r,Y^*_r,U^*_r)dr\right)\partial_yf(s,Y_s^*,U_s^*)\theta_{s,t}ds dC_t^*\right]\\
		&=\E\left[\int_0^T \E_t\left[\int_t^T \exp\left(\int_0^s \partial_uf(r,Y^*_r,U^*_r)dr\right)\partial_yf(s,Y_s^*,U_s^*)\theta_{s,t}ds\right] dC_t^*\right]\\
		&=\E\left[\int_0^T \nabla V(C^*)(t)dC_t^*\right]=M E\left[\int_0^T \psi_tdC_t^*\right]=Mw,
	\end{align*}
	where we use the Fubini theorem in the first equality, Remark \ref{r2} in the second equality. A similar analysis yields that
	\begin{displaymath}
		I=\E\left[\int_0^T \nabla V(C^*)(t)dC_t\right]\leq M \E\left[\int_0^T \psi_tdC_t\right]\leq Mw.
	\end{displaymath}
	Combining the above results implies that $U^*_0-U_0\geq 0$. Hence, $C^*$ is optimal. Necessity follows from Lemma \ref{l4} below and Lemma 3.4 in \cite{BR}.
\end{proof}

\section{Properties and Construction of the Optimal Consumption Plan}

In this section, we investigate properties of the optimal consumption plan, proving first the the dynamic programming principle. We then conduct a quantitative analysis of consumption in gulps versus consumption in rates. Furthermore, we provide a more detailed construction of the optimal consumption plan using first-order conditions.

\subsection{Dynamic programming principle}
Theorem \ref{t2} allows us to  establish a version of the dynamic programming principle. If a consumption plan is optimal at time zero, it remains the best possible continuation at any later time.
\begin{theorem}\label{t3}
	Let $\tau\leq T$ be a stopping time and set
	\begin{displaymath}
		\mathcal{A}_\tau=\left\{C\in\mathcal{X}\Bigg| C=C^* \textrm{ on } [0,\tau) \textrm{ and }\E_\tau\left[\int_\tau^T \psi_t dC_t\right]\leq \E_\tau\left[\int_\tau^T \psi_t dC^*_t\right]\right\}.%\Gamma_\tau(C)\leq \Gamma_\tau(C^*)\},
	\end{displaymath}
%	where
%	\begin{displaymath}
%		\Gamma_\tau(C)=\frac{1}{\psi_\tau}\E_\tau\left[\int_\tau^T \psi_t dC_t\right],
%	\end{displaymath}
%is the price functional at time $\tau$. 
If $C^*$ is optimal for the problem \eqref{e1}, it also solves the problem
	\begin{displaymath}
		\esssup_{C\in\mathcal{A}_\tau}U_\tau^C.
	\end{displaymath}
	%It means that an optimal consumption plan at time zero is its best continuation at any time afterward.
\end{theorem}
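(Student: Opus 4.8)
Working under Assumptions \ref{a1} and \ref{a2}, my plan is to argue by contradiction via a pasting (splicing) construction, combining the recursive structure of the utility with the comparison theorem for BSDEs with jumps. Since $C^*\in\mathcal{A}_\tau$ (its defining inequality holds with equality), it suffices to prove that $U_\tau^{C^*}\geq U_\tau^{C}$ holds $\P_0$-a.s.\ for each fixed $C\in\mathcal{A}_\tau$: this exhibits $U_\tau^{C^*}$ as a $\P_0$-a.s.\ upper bound for the family $\{U_\tau^{C}:C\in\mathcal{A}_\tau\}$, hence as its essential supremum, so that $C^*$ attains the $\esssup$. Suppose this failed for some $C\in\mathcal{A}_\tau$; then there would be a set $A\in\mathcal{F}_\tau$ with $\P_0(A)>0$ on which $U_\tau^{C}>U_\tau^{C^*}$.

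On this set I would switch from $C^*$ to the strictly better continuation $C$, defining the spliced plan $\bar{C}$ by
\begin{displaymath}
d\bar{C}_t = I_{[0,\tau)}(t)\,dC^*_t + I_{[\tau,T]}(t)\bigl(I_A\,dC_t + I_{A^c}\,dC^*_t\bigr).
\end{displaymath}
Because $A\in\mathcal{F}_\tau$ and $\tau$ is a stopping time, $\bar{C}$ is again the distribution function of a nonnegative optional random measure, so $\bar{C}\in\mathcal{X}$. For feasibility I would condition on $\mathcal{F}_\tau$: partitioning $[0,T]=[0,\tau)\cup[\tau,T]$, pulling the $\mathcal{F}_\tau$-measurable indicators out of the integrals over $[\tau,T]$, and invoking the budget inequality defining $\mathcal{A}_\tau$ together with the tower property yields
\begin{displaymath}
\E\left[\int_0^T \psi_t\,d\bar{C}_t\right] \leq \E\left[\int_0^T \psi_t\,dC^*_t\right] = w,
\end{displaymath}
so that $\bar{C}\in\mathcal{A}(w)$ and, by Theorem \ref{propertyofutility}, its utility is well defined.

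The crux is the pasting identity $U_t^{\bar{C}}=I_A\,U_t^{C}+I_{A^c}\,U_t^{C^*}$ for $t\in[\tau,T]$. Since $C=C^*$ on $[0,\tau)$, the induced levels of satisfaction satisfy $Y_t^{\bar{C}}=Y_t^{C}$ on $A$ and $Y_t^{\bar{C}}=Y_t^{C^*}$ on $A^c$ for $t\geq\tau$. Multiplying the BSDEs \eqref{BSDE} for $U^{C}$ and $U^{C^*}$ by the $\mathcal{F}_\tau$-measurable indicators $I_A$ and $I_{A^c}$, adding them, and using $\E_t[I_A\,\cdot\,]=I_A\,\E_t[\,\cdot\,]$ for $t\geq\tau$ shows that the right-hand side solves the same BSDE on $[\tau,T]$ as $U^{\bar{C}}$; uniqueness then gives the identity. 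Evaluating at $t=\tau$ and using $U_\tau^{C}>U_\tau^{C^*}$ on $A$ yields $U_\tau^{\bar{C}}\geq U_\tau^{C^*}$ everywhere, with strict inequality on $A$.

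Finally I would transport this gain to time zero. On $[0,\tau)$ we have $\bar{C}=C^*$, hence $Y^{\bar{C}}=Y^{C^*}$ there, so $U^{\bar{C}}$ and $U^{C^*}$ solve the same BSDE on $[0,\tau]$ with terminal data $U_\tau^{\bar{C}}\geq U_\tau^{C^*}$ that differ strictly on the positive-probability set $A$. The strict comparison theorem (Theorem 3.2.2 in \cite{Delong}) then forces $U_0^{\bar{C}}>U_0^{C^*}$, contradicting the optimality of $C^*$ for \eqref{e1}; this proves the claim. I expect the pasting identity to be the main obstacle: one must carefully justify that multiplying the defining BSDEs by the $\mathcal{F}_\tau$-measurable indicators produces a genuine solution on $[\tau,T]$, which is exactly where the measurability $A\in\mathcal{F}_\tau$ and the uniqueness from Theorem \ref{propertyofutility} are essential, and then verify that the strict comparison applies at time zero.
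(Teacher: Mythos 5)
Your proof is correct, but it takes a genuinely different route from the paper. The paper proves the theorem as a direct consequence of the Kuhn--Tucker machinery of Section~3: it repeats, conditionally on $\mathcal{F}_\tau$, the sufficiency argument of Theorem~\ref{t2} --- the gradient inequality from concavity of $f$, the stochastic Gronwall inequality (Proposition B.1 in \cite{KSS}), Fubini and Jacod's Theorem (1.33) to identify $\nabla V(C^*)$ --- and then invokes condition (ii) of Theorem~\ref{t2} together with the \emph{conditional} flat-off condition of Remark~\ref{r3} to conclude $U^*_\tau-U_\tau\geq 0$. You instead argue by contradiction with the classical dynamic-programming splicing construction: paste the allegedly better continuation $C$ onto $C^*$ on the $\mathcal{F}_\tau$-measurable set $A$, check budget feasibility of $\bar C$ by the tower property, establish the identity $U^{\bar C}_t=I_A U^C_t+I_{A^c}U^{C^*}_t$ on $[\tau,T]$ via the local property and uniqueness of the BSDE, and then contradict optimality at time zero through the strict comparison theorem (Theorem 3.2.2 in \cite{Delong}), exactly the result the paper itself uses for strict concavity --- note that since the generator $f(s,y,u)$ does not depend on the martingale components $(\bar Z,\bar\Psi)$, the jump-BSDE comparison hypotheses hold trivially, and the comparison on the stochastic intervals $[\tau,T]$ and $[0,\tau]$ is legitimate after stopping (the generators of $U^{\bar C}$ and $U^{C^*}$ agree $ds\otimes d\P_0$-a.e.\ on $\{s<\tau\}$, the set $\{s=\tau\}$ being $ds$-null). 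What each approach buys: the paper's verification is quantitative (it produces an explicit nonnegative lower bound for $U^*_\tau-U_\tau$) and reuses the first-order conditions already in place, but it therefore needs differentiability of $f$ and the full Theorem~\ref{t2}/Remark~\ref{r3} apparatus; your argument bypasses the first-order conditions and the gradient/Gronwall computation entirely, needing only well-posedness (Theorem~\ref{propertyofutility}), BSDE uniqueness, and strict comparison, so it is more self-contained and would survive even without smoothness of the aggregator --- at the price of being purely non-quantitative and requiring the slightly delicate pasting identity, whose justification (indicator multiplication plus uniqueness, as you indicate) is the one step that must be written out with care.
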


\begin{proof}
	For any $C\in\mathcal{A}_\tau$, let $U^*=U^{C^*}$, $U=U^C$, $Y^*=Y^{C^*}$ and $Y=Y^C$. By a similar analysis as the proof of Theorem \ref{t2}, we have
	\begin{displaymath}
		U_\tau^*-U_\tau\geq \E_\tau\left[\int_\tau^T \exp\left(\int_\tau^t \partial_uf(r,Y_r^*,U_r^*)dr\right)\partial_yf(t,Y_t^*,U_t^*)(Y_t^*-Y_t)dt\right].
		\end{displaymath}
	Multiplying $\exp(\int_0^\tau \partial_uf(r,Y_r^*,U_r^*)dr)$ on both sides yields that
	\begin{align*}
		&\exp\left(\int_0^\tau \partial_uf(r,Y_r^*,U_r^*)dr\right)(U_\tau^*-U_\tau)\\
		\geq& \E_\tau\left[\int_\tau^T \exp\left(\int_0^t \partial_uf(r,Y_r^*,U_r^*)dr\right)\partial_yf(t,Y_t^*,U_t^*)(Y_t^*-Y_t)dt\right]\\
		=& \E_\tau\left[\int_\tau^T \exp\left(\int_0^t \partial_uf(r,Y_r^*,U_r^*)dr\right)\partial_yf(t,Y_t^*,U_t^*)\int_\tau^t \theta_{t,s}(dC_s^*-dC_s)dt\right].
	\end{align*}
	We may split the last conditional expectation into the following two parts:
	\begin{align*}
		& I^*=\E_\tau\left[\int_\tau^T \exp\left(\int_0^t \partial_uf(r,Y_r^*,U_r^*)dr\right)\partial_yf(t,Y_t^*,U_t^*)\int_\tau^t \theta_{t,s}dC_s^*dt\right],\\
		& I=\E_\tau\left[\int_\tau^T \exp\left(\int_0^t \partial_uf(r,Y_r^*,U_r^*)dr\right)\partial_yf(t,Y_t^*,U_t^*)\int_\tau^t \theta_{t,s}dC_sdt\right].
	\end{align*}
	By the Fubini theorem and Theorem (1.33) in Jacod \cite{J79}, we can deduce that
	\begin{align*}
		I^*=& \E_\tau\left[\int_\tau^T \int_s^T \exp\left(\int_0^t \partial_uf(r,Y_r^*,U_r^*)dr\right)\partial_yf(t,Y_t^*,U_t^*) \theta_{t,s}dtdC_s^*\right]\\
		=&\E_\tau\left[\int_\tau^T \E_s\left[\int_s^T \exp\left(\int_0^t \partial_uf(r,Y_r^*,U_r^*)dr\right)\partial_yf(t,Y_t^*,U_t^*) \theta_{t,s}dt\right]dC_s^*\right]\\
		=& \E_\tau\left[\int_\tau^T \nabla V(C^*)(s)dC^*_s\right].
	\end{align*}
	Applying Remark \ref{r3} implies that
	\begin{displaymath}
		I^*=M \E_\tau\left[\int_\tau^T M\psi_tdC^*_t\right].
	\end{displaymath}
	Similarly, we obtain that
		\begin{displaymath}
	I=\E_\tau\left[\int_\tau^T \nabla V(C^*)(s)dC_s\right]\leq M \E_\tau\left[\int_\tau^T M\psi_tdC_t\right].
	\end{displaymath}
	The above analysis yields that $\exp(\int_0^\tau \partial_uf(r,Y_r^*,U_r^*)dr)(U_\tau^*-U_\tau)\geq 0$.
\end{proof}
%\begin{lemma}\label{l5}
	
%\end{lemma}

\subsection{Construction of the optimal consumption plan}
  Theorem \ref{t1} confirms that the utility maximization problem \eqref{e1} has a unique solution, but it does not provide a method for constructing the optimal consumption plan. Inspired by \cite{BR} and the first-order conditions derived in Theorem \ref{t2}, we develop a forward-backward system (see Eqs. \eqref{e10}-\eqref{e9'} below) that characterizes a stochastic process known as the minimal level of satisfaction (see Definition 3.12 in \cite{BR}). The optimal consumption plan is the one that ensures the level of satisfaction remains above this minimal level in the most minimal way—specifically, the investor consumes only when their satisfaction level reaches the minimal threshold.  The minimal-level equation is an alternative to the Hamilton-Jacobi-Bellman equation within the dynamic programming approach; it is applicable   beyond the Markovian framework. To derive the minimal-level equation, we must first specify the dynamics for the level of satisfaction.

\begin{assumption}\label{a3}
	The   level of satisfaction $\eta$ and the consumption weights $\theta$ are given by
	\begin{displaymath}
		\eta_t=\eta e^{-\int_0^t \beta_sds},\ \theta_{t,s}=\beta_s e^{-\int_s^t \beta_rdr}, \ 0\leq s\leq t\leq T,
	\end{displaymath}
	where $\beta$ is a strictly positive, continuous function and $\eta\geq 0$ is a constant.
\end{assumption}

 For an adapted process $L$, we  define $Y^L$  by 
\begin{equation}\label{e10}
Y^L_t:=e^{-\int_0^t\beta_r dr}\left(\eta\vee \sup_{0\leq v\leq t}\left\{L_v e^{\int_0^v\beta_rdr}\right\}\right).
\end{equation}

%which is equivalent to
%\begin{equation}\label{e9}
%\begin{split}
%&\E_\tau\left[\int_\tau^T \exp\left(\int_\tau^t \partial_u f(r,Y_r^L,U_r^{L})dr\right)\partial_y f \left(t,\sup_{\tau\leq v\leq t}\left\{L_v e^{-\int_v^t \beta_sds}\right\}, U_t^{L}\right)\theta_{t,\tau}dt\right]\\
%=&M\psi_\tau\exp\left(\int_0^\tau \partial_u f(r,Y_r^L,U_r^{L})dr\right),
%\end{split}
%\end{equation}

By Lemma 3.9 in \cite{BR}, the process $C^L$ given by $C^L_{0-}=0$,
\begin{equation}\label{e11}
	C^L_t=\int_0^t Y^L_sds+\int_0^t \beta_s^{-1}dY^L_s,
\end{equation}
is nondecreasing and adapted; it is thus a consumption plan. Furthermore, $Y^L$ is the minimal level of satisfaction obtained by this consumption plan $C^L$, i.e.,
\begin{displaymath}
	Y^L_t=\min_{C\in\mathcal{X},Y^C\geq L} Y^C_t, \ t\in[0,T].
\end{displaymath}
Let  $U^L$ be the recursive utility obtained by the consumption plan $C^L$, i.e.,
\begin{equation}\label{UL}
U^L_s=\E_s\left[\int_s^T f(t,Y^{L}_t,U^L_t)dt\right].
\end{equation}
The above analysis indicates that the process $U^L$ given by \eqref{UL} can be regarded as optimal if the first-order condition
 
\begin{equation}\label{e9'}
	\E_\tau\left[\int_\tau^T \exp\left(\int_0^t \partial_u f(r,Y^L_r,U_r^{L})dr\right)\partial_y f\left(t,\sup_{\tau\leq v\leq t}\left\{L_v e^{-\int_v^t \beta_sds}\right\}, U_t^{L}\right)\theta_{t,\tau}dt\right]=M\psi_\tau,
\end{equation}
holds true for all stopping times $\tau$ and some suitable Lagrange multiplier $M$.
%for any $s\in[\tau,T]$,
%\begin{displaymath}
%	U_s^{\tau,L}=E_s[\int_s^T f(t,\sup_{\tau\leq v\leq t}\{L_v e^{-\int_v^t \beta_sds}\}, U_t^{\tau,L})dt].
%\end{displaymath}

\begin{theorem}
	Assume that forward-backward system  \eqref
{e10}, \eqref{UL}, and \eqref{e9'},  admits a unique solution $L$. %and the felicity function $f$ is submodular, i.e.,
	%\begin{displaymath}
	%	\frac{\partial^2 f(t,y,u)}{\partial y \partial u}\leq 0, \textrm{ for any } t\in[0,T].
	%\end{displaymath}
	Then, the consumption plan $C^L$ given by Eq. \eqref{e11} is optimal for the utility maximization problem \eqref{e1} with initial capital $w=\Psi(C^L)$.
\end{theorem}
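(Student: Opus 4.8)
The plan is to reduce the claim to the first-order conditions of Theorem \ref{t2}: I will verify that the candidate $C^L$ defined by \eqref{e11} satisfies conditions (i)--(iii) there, with the same multiplier $M$ appearing in \eqref{e9'}. First I record the identifications that make this possible. By Lemma 3.9 in \cite{BR}, $C^L$ is a genuine consumption plan and $Y^L$ from \eqref{e10} is exactly its level of satisfaction, i.e.\ $Y^{C^L}=Y^L$; hence the recursive utility $U^{C^L}$ coincides with $U^L$ from \eqref{UL}. Substituting these into \eqref{nablaV} gives the explicit form
\[
\nabla V(C^L)(\tau)=\E_\tau\left[\int_\tau^T \exp\left(\int_0^t \partial_u f(r,Y^L_r,U^L_r)dr\right)\partial_y f(t,Y^L_t,U^L_t)\theta_{t,\tau}dt\right].
\]
Condition (i) is then immediate from the normalization $w=\Psi(C^L)$, where $\Psi(C)=\E[\int_0^T\psi_t dC_t]$ is the budget cost (which under Assumption \ref{a2} equals $\E^*[\int_0^T\gamma_t dC_t]$).

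For condition (ii) the key is a deterministic identity for the running-maximum process in \eqref{e10}. Writing $A_t=\eta\vee\sup_{0\le v\le t}\{L_v e^{\int_0^v\beta_r dr}\}$, one has $Y^L_t=A_t e^{-\int_0^t\beta_r dr}$ and, for any stopping time $\tau\le t$,
\[
Y^L_t=\left(e^{-\int_\tau^t\beta_r dr}Y^L_\tau\right)\vee\sup_{\tau\le v\le t}\left\{L_v e^{-\int_v^t\beta_r dr}\right\}\ge \sup_{\tau\le v\le t}\left\{L_v e^{-\int_v^t\beta_r dr}\right\}.
\]
Since $f(t,\cdot,u)$ is concave, $\partial_y f$ is nonincreasing in $y$, so $\partial_y f(t,Y^L_t,U^L_t)\le \partial_y f(t,\sup_{\tau\le v\le t}\{L_v e^{-\int_v^t\beta_r dr}\},U^L_t)$ pointwise. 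Comparing the displayed expression for $\nabla V(C^L)(\tau)$ with the left-hand side of \eqref{e9'} then yields $\nabla V(C^L)(\tau)\le M\psi_\tau$ at every stopping time $\tau$. As both $\nabla V(C^L)$ and $M\psi$ are optional, the optional section theorem upgrades this to $\nabla V(C^L)(t)\le M\psi_t$ for all $t\in[0,T]$, $\P_0$-a.s., which is condition (ii).

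For condition (iii) I first pin down the support of the consumption measure. Substituting $Y^L_t=A_t e^{-\int_0^t\beta_r dr}$ into \eqref{e11} yields $dC^L_t=\beta_t^{-1}e^{-\int_0^t\beta_r dr}dA_t$, so $dC^L$ is carried by the contact set $\{t: Y^L_t=L_t\}$ on which $A$ actually increases. On this set the inequality above becomes an equality: if $Y^L_\tau=L_\tau$ then $e^{-\int_\tau^t\beta_r dr}Y^L_\tau=L_\tau e^{-\int_\tau^t\beta_r dr}\le \sup_{\tau\le v\le t}\{L_v e^{-\int_v^t\beta_r dr}\}$, whence $Y^L_t=\sup_{\tau\le v\le t}\{L_v e^{-\int_v^t\beta_r dr}\}$ for every $t\ge\tau$. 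Thus the two integrands coincide $dC^L$-almost everywhere, and integrating \eqref{e9'} against $dC^L$ gives
\[
\E\left[\int_0^T \nabla V(C^L)(\tau)dC^L_\tau\right]=\E\left[\int_0^T M\psi_\tau dC^L_\tau\right],
\]
which is precisely condition (iii). With (i)--(iii) established, the sufficiency part of Theorem \ref{t2} shows that $C^L$ solves \eqref{e1} for $w=\Psi(C^L)$.

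The step I expect to be the main obstacle is making condition (iii) fully rigorous. I must argue carefully that the optional random measure $dC^L$ charges only the contact set $\{Y^L=L\}$---including the treatment of lump-sum times, where $L$ and hence $Y^L$ jump upward so that $Y^L=L$ holds only after the jump---and that the identity of the two integrands on that set may legitimately be inserted under $\E_\tau[\cdot]$ inside the integral against $dC^L$. This will rely on the representation $dC^L_t=\beta_t^{-1}e^{-\int_0^t\beta_r dr}dA_t$, a stopping-time exhaustion of the support of $dA$, and the equality form of \eqref{e9'}. By contrast, the monotonicity and section arguments underlying (ii) are comparatively routine once the running-maximum identity for $Y^L$ is in place.
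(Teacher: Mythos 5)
Your proposal is correct and follows essentially the same route as the paper's proof: both reduce the claim to the first-order conditions of Theorem \ref{t2}, use the running-maximum identity for $Y^L$ together with the concavity of $f$ (so that $\partial_y f$ is nonincreasing in $y$) to compare $\nabla V(C^L)$ with the left-hand side of \eqref{e9'}, invoke the optional section theorem for condition (ii), and use Lemma 3.9 of \cite{BR} (the measure $dC^L$ charges only the contact set $\{Y^L=L\}$, where the inequality becomes an equality) for the flat-off condition (iii). The only cosmetic difference is that you verify (ii) directly at stopping times via \eqref{e9'}, whereas the paper first integrates the same pointwise inequality against an arbitrary consumption plan $C$ and then applies the section theorem; the underlying comparison is identical.
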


\begin{proof}
	It is sufficient to show that $C^L$ satisfies the first-order conditions. By Eq. \eqref{e10}, it is easy to check that, for any $0\leq s\leq t\leq T$,
	\begin{displaymath}
		Y^L_t=\left\{Y^{L}_se^{-\int_s^t \beta_rdr}\right\}\vee \sup_{s\leq v\leq t}\left\{L_v e^{-\int_v^t \beta_rdr}\right\}.
	\end{displaymath}
	Since the utility function is concave, we may check that
	%	Noting that $Y^{C^L}_t\geq \sup_{\tau\leq v\leq t}\{L_v e^{-\int_v^t \beta_sds}\}$ for any $0\leq \tau\leq t\leq T$, by Proposition \ref{p1}, we have $U^{\tau,L}_t\leq U^L_s$, for any $\tau\leq s\leq T$. Therefore, we may check that for any $C\in\mathcal{X}$
		\begin{equation}\begin{split}\label{e12}
	&\E\left[\int_0^T \nabla V(C^L)(s)dC_s\right]\\
	=&\E\left[\int_0^T\left\{\int_s^T \exp\left(\int_0^t \partial_uf(r,Y_r^L,U_r^L)dr\right)\partial_y f\left(t, Y^L_t, U^L_t\right)\theta_{t,s}dt\right\}dC_s\right]\\
	\leq &\E\left[\int_0^T\left\{\int_s^T \exp\left(\int_0^t \partial_uf(r, Y_r^L, U^L_r)dr\right)\partial_y f\left(t, \sup_{s\leq v\leq t}\left\{L_v e^{-\int_v^t \beta_wdw}\right\}, U^L_t\right)\theta_{t,s}
	dt\right\}dC_s\right]\\
	=&\E\left[\int_0^T\E_s\left[\int_s^T \exp\left(\int_0^t \partial_uf(r, Y_r^L, U^L_r)dr\right)\partial_y f\left(t, \sup_{s\leq v\leq t}\left\{L_v e^{-\int_v^t \beta_wdw}\right\}, U^L_t\right)\theta_{t,s}
	dt\right]dC_s\right]\\
	=&\E\left[\int_0^T M\psi_sdC_s\right].	
	\end{split}\end{equation}
	%\begin{equation}\begin{split}\label{12}
	%	&E[\int_0^T \nabla V(C^L)(s)dC_s]\\
	%	=&E[\int_0^T\{\int_s^T \partial_y f(t, \{Y^{L}_se^{-\int_s^t \beta_rdr}\}\vee \sup_{s\leq v\leq t}\{L_v e^{-\int_v^t \beta_wdw}\}, U^L_t)\theta_{t,s}\\
	%	&\times \exp(\int_s^t \partial_uf(r,\{Y^{L}_se^{-\int_s^t \beta_wdw}\}\vee \sup_{s\leq v\leq r}\{L_v e^{-\int_v^t \beta_wdw}\}, U^L_r)dr+\int_0^s \partial_uf(r,Y_r^L,U_r^L)dr)dt\}dC_s]\\
	%	\leq &E[\int_0^T\{\int_s^T \exp(\int_0^t \partial_uf(r, \sup_{s\leq v\leq r}\{L_v e^{-\int_v^t \beta_wdw}\}, U^L_r)dr)\partial_y f(t, \sup_{s\leq v\leq t}\{L_v e^{-\int_v^t \beta_wdw}\}, U^L_t)\theta_{t,s}
	%	dt\}dC_s]\\
	%	=&E[\int_0^TE_s[\int_s^T \exp(\int_0^t \partial_uf(r, \sup_{s\leq v\leq r}\{L_v e^{-\int_v^t \beta_wdw}\}, U^L_r)dr)\partial_y f(t, \sup_{s\leq v\leq t}\{L_v e^{-\int_v^t \beta_wdw}\}, U^L_t)\theta_{t,s}
	%	dt]dC_s]\\
	%	=&E[\int_0^T M\psi_sdC_s].	
	%\end{split}\end{equation}
	Since the above inequality holds for any $C\in\mathcal{X}$, by Meyer's optional section theorem, we derive that $\nabla V(C^L)\leq M\psi$. By Lemma 3.9 in \cite{BR}, for each fixed $\omega$, $t\in[0,T]$ is a point of increase of $C^L_{\cdot}(\omega)$ (which means $dC^L_{t}(\omega)=C^L_{t}(\omega)-C^L_{t-}(\omega)>0$) only when $Y^L_t(\omega)=L_t(\omega)$. Therefore, if we replace $C$ in Eq. \eqref{e12} by $C^L$, we will end up with an equation, which is the desired flat off condition. 
\end{proof}

\begin{remark}
Compared to the backward equation that characterizes the minimal level of satisfaction in the time-additive case (see Eq. (17) in \cite{BR}), our equation \eqref{e9'} depends on the entire path of the process $L$. Consequently, unlike the time-additive case, Theorem 3 in \cite{BE} does not guarantee the existence of a solution to the forward-backward system \eqref{e10}-\eqref{e9'}. 
\end{remark}

\subsection{Quantitative properties of the optimal consumption}

 In this subsection, we study some quantitative properties of the optimal consumption plan. For this purpose, we assume that the interest rate $r$ and the depreciation rate $\beta$ are  constants. Hence, the level of satisfaction is given by 
\begin{equation}\label{YCt}
Y^C_t=\eta e^{-\beta t}+\int_0^t \beta e^{-\beta(t-s)}dC_s.
\end{equation}
%with constants $\eta,\beta>0$. That is, $\theta_{s,t}=\beta e^{-\beta(s-t)}$. 
We define
\begin{align}\label{1}
F_s(C):=\exp\left(\int_0^s \partial_u f(r,Y_r^C,U_r^C)dr\right)\partial_y f(s,Y_s^C,U_s^C), \ s\in[0,T].
\end{align}

Let $\tau_0<\tau_1$ be two stopping times. Similar with the free interval for the irreversible investment problem introduced in \cite{RS},  $[\tau_0,\tau_1]$ is called a free interval if the consumption occurs at a strictly positive rate throughout the interval.

\begin{theorem}\label{freeinterval}
Suppose that $[\tau_0,\tau_1]$ is a free interval. Then, for all $t\in(\tau_0,\tau_1)$, we have
\begin{equation}\label{8}
F_t(C)=M\frac{r+\beta}{\beta}\psi_t,
\end{equation}
where $M$ is the Lagrange multiplier in Theorem \ref{t2}.
\end{theorem}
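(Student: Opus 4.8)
The plan is to turn the first-order conditions of Theorem \ref{t2} into a genuine equality on the free interval and then to differentiate that equality in time, reading off $F_t(C)$ from the drift. The first ingredient is an explicit representation of the utility gradient. With $\beta$ constant, Assumption \ref{a3} gives $\theta_{s,t}=\beta e^{-\beta(s-t)}=\beta e^{\beta t}e^{-\beta s}$, so with $F_s=F_s(C)$ as in \eqref{1} the expression \eqref{nablaV} becomes
\[
\nabla V(C)(t)=\beta e^{\beta t}\,\E_t\!\left[\int_t^T F_s e^{-\beta s}\,ds\right]=\beta e^{\beta t}\left(N_t-\int_0^t F_s e^{-\beta s}\,ds\right),
\]
where $N_t:=\E_t\big[\int_0^T F_s e^{-\beta s}\,ds\big]$ is a c\`{a}dl\`{a}g $\P_0$-martingale and the subtracted integral is absolutely continuous and adapted. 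In particular $\nabla V(C)$ has a c\`{a}dl\`{a}g modification.

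Next I would show $\nabla V(C)(t)=M\psi_t$ for every $t\in(\tau_0,\tau_1)$. Theorem \ref{t2}(ii) gives $\nabla V(C)(t)\le M\psi_t$ throughout, while the flat-off condition (iii) says that $C$ places no mass on $\{\nabla V(C)<M\psi\}$. On a free interval consumption occurs at a strictly positive rate, so $dC$ and Lebesgue measure have the same null sets on $(\tau_0,\tau_1)$; hence the set $\{t\in(\tau_0,\tau_1):\nabla V(C)(t)<M\psi_t\}$ is Lebesgue-null almost surely. Since $\nabla V(C)$ and $\psi$ are both c\`{a}dl\`{a}g, this a.e. equality upgrades to equality at every point of the open interval.

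With constant $r$ we have $\psi_t=e^{-rt}Z_t$, where $Z_t=\frac{d\P^*}{d\P_0}\big|_{\mathcal{F}_t}$ is a $\P_0$-martingale. Substituting the two representations into $\nabla V(C)(t)=M\psi_t$ and rearranging gives, on $(\tau_0,\tau_1)$,
\[
N_t-\int_0^t F_s e^{-\beta s}\,ds=\frac{M}{\beta}\,e^{-(r+\beta)t}Z_t.
\]
Because $e^{-(r+\beta)t}$ is deterministic, continuous and of finite variation, integration by parts yields $d\big(e^{-(r+\beta)t}Z_t\big)=e^{-(r+\beta)t}\,dZ_t-(r+\beta)e^{-(r+\beta)t}Z_t\,dt$, so the jumps of $Z$ enter only the martingale term. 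Both sides of the last display are thus special semimartingales with absolutely continuous finite-variation parts; identifying the $dt$-coefficients, namely $-F_t e^{-\beta t}$ on the left and $-\frac{M}{\beta}(r+\beta)e^{-(r+\beta)t}Z_t$ on the right, and invoking uniqueness of the canonical decomposition, I obtain $F_t e^{-\beta t}=\frac{M}{\beta}(r+\beta)e^{-(r+\beta)t}Z_t$ for a.e. $t$, i.e. $F_t(C)=M\frac{r+\beta}{\beta}\psi_t$. Right-continuity of both sides then promotes this to every $t\in(\tau_0,\tau_1)$.

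The delicate points are the passage from the inequality and flat-off conditions to a genuine pointwise equality of the two c\`{a}dl\`{a}g processes on the whole open interval, and making the drift-matching rigorous on the stochastic interval between $\tau_0$ and $\tau_1$: one localizes by stopping, verifies that the processes involved are special semimartingales, and appeals to uniqueness of their canonical decompositions, so that the jumps of $Z$ and of the martingale part of $N$ drop out of the finite-variation comparison. The remaining manipulations are routine.
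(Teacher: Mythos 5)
Your proof is correct, and it rests on the same underlying principle as the paper's — the first-order condition is binding throughout the free interval, and the identity \eqref{8} is then read off by isolating the absolutely continuous (drift) part of the resulting equation — but the execution differs in one substantive way. The paper first applies Bayes' rule to pass to the measure $\P^*$: with $Z_s=\frac{d\P^*}{d\P_0}\big|_{\mathcal{F}_s}$, the binding condition becomes $\E^*_t\left[\int_t^T F_s(C)\beta e^{-\beta s}Z_s^{-1}ds\right]=Me^{-(r+\beta)t}$, so after adding $\int_0^t F_s(C)\beta e^{-\beta s}Z_s^{-1}ds$ to both sides, the left side is a $\P^*$-martingale while the right side is absolutely continuous in $t$; hence the martingale is constant on the interval, and differentiating the right side yields \eqref{8} in one line. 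You instead stay under $\P_0$, keep the density martingale $Z$ on the right-hand side, and compare the canonical decompositions of the two special semimartingales $N_t-\int_0^t F_s e^{-\beta s}ds$ and $\frac{M}{\beta}e^{-(r+\beta)t}Z_t$, noting that the jumps of $Z$ are absorbed into the martingale part so only the $dt$-coefficients need to match. The measure change buys the paper a deterministic right-hand side and thus an elementary ``martingale equal to an absolutely continuous process is constant'' argument; your route avoids the Bayes computation at the cost of invoking uniqueness of the canonical decomposition on a stochastic interval, which, as you correctly note, requires a localization by stopping times. You are also more explicit than the paper on two points it glosses over: why the first-order condition is an equality at \emph{every} point of $(\tau_0,\tau_1)$ (the flat-off condition together with the strictly positive consumption rate gives Lebesgue-a.e. equality, and c\`{a}dl\`{a}g regularity of $\nabla V(C)$ and $\psi$ upgrades it to everywhere), and why the a.e. drift identity extends to all $t$ in the open interval; both proofs need these steps, so making them explicit is a genuine improvement rather than a detour.
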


\begin{proof}
It follows from the first-order conditions in Theorem \ref{t2} that, for any $t\in(\tau_0,\tau_1)$,
\begin{align}\label{2}
e^{\beta t}\E_t\left[\int_t^T F_s(C)\beta e^{-\beta s}ds\right]=M\psi_t=M e^{-rt}\frac{d\P^*}{d\P_0}\Big|_{\mathcal{F}_t}.
\end{align} 
Note that 
\begin{equation}\label{3}\begin{split}
&\E_t\left[\frac{d\P^*}{d\P_0}\int_t^T F_s(C)\beta e^{-\beta s}\left(\frac{d\P^*}{d\P_0}\Big|_{\mathcal{F}_s}\right)^{-1}ds\right]\\
=&\E_t\left[\int_t^T \frac{d\P^*}{d\P_0}F_s(C)\beta e^{-\beta s}\left(\frac{d\P^*}{d\P_0}\Big|_{\mathcal{F}_s}\right)^{-1}ds\right]\\
=&\E_t\left[\int_t^T \frac{d\P^*}{d\P_0}\Big|_{\mathcal{F}_s}F_s(C)\beta e^{-\beta s}\left(\frac{d\P^*}{d\P_0}\Big|_{\mathcal{F}_s}\right)^{-1}ds\right]\\
=&\E_t\left[\int_t^T F_s(C)\beta e^{-\beta s}ds\right].
\end{split}\end{equation}
Combining \eqref{2}, \eqref{3} and applying the Bayesian rule, for any $t\in(\tau_0,\tau_1)$, we have
\begin{equation}
\begin{split}\label{4}
&\E^*_t\left[\int_t^T F_s(C)\beta e^{-\beta s}\left(\frac{d\P^*}{d\P_0}\Big|_{\mathcal{F}_s}\right)^{-1}ds\right]\\=&\E_t\left[\frac{d\P^*}{d\P_0}\int_t^T F_s(C)\beta e^{-\beta s}\left(\frac{d\P^*}{d\P_0}\Big|_{\mathcal{F}_s}\right)^{-1}ds\right]\left(\frac{d\P^*}{d\P_0}\Big|_{\mathcal{F}_t}\right)^{-1}\\
=&\E_t\left[\int_t^T F_s(C)\beta e^{-\beta s}ds\right]\left(\frac{d\P^*}{d\P_0}\Big|_{\mathcal{F}_t}\right)^{-1}=M e^{-(r+\beta)t}.
\end{split}\end{equation}
It follows that 
\begin{equation}\label{5}
\E^*_t\left[\int_0^T F_s(C)\beta e^{-\beta s}\left(\frac{d\P^*}{d\P_0}\Big|_{\mathcal{F}_s}\right)^{-1}ds\right]=M e^{-(
r+\beta)t}+\int_0^t F_s(C)\beta e^{-\beta s}\left(\frac{d\P^*}{d\P_0}\Big|_{\mathcal{F}_s}\right)^{-1}ds.
\end{equation}
Since the left-hand side of \eqref{5}, denoted by $M^*_t$, is an $\P^*$-martingale while the right-hand side is absolutely continuous,  $M^*_t$ should be a constant. 
%The left-hand side is a martingale under $\P^*$. Let $B^*$ be the Brownian motion under $\P^*$, we have
%\begin{align}\label{6}
%\E^*_t\left[\int_0^T F_s(C)\beta e^{-\beta s}ds\frac{d\P_0}{d\P^*}\Big|_{\mathcal{F}_s}ds\right]=\E^*\left[\int_0^T F_s(C)\beta e^{-\beta s}\frac{d\P_0}{d\P^*}\Big|_{\mathcal{F}_s}ds\right]+\int_0^t Z^*_s dB^*_s.
%\end{align}
%Then, we have $Z^*\equiv 0$. That is, for any $t\in(\tau_0,\tau_1)$,
%\begin{align}\label{7}
%M e^{-(r+\beta)t}+\int_0^t F_s(C)\beta e^{-\beta s}\frac{d\P_0}{d\P^*}\Big|_{\mathcal{F}_s}ds=\E^*\left[\int_0^T F_s(C)\beta e^{-\beta s}\frac{d\P_0}{d\P^*}\Big|_{\mathcal{F}_s}ds\right].
%\end{align}
Taking derivatives on both sides of \eqref{5}, we obtain that 
\begin{align*}
-M(r+\beta)e^{-(r+\beta)t}+\beta e^{-\beta t} F_t(C)\left(\frac{d\P^*}{d\P_0}\Big|_{\mathcal{F}_t}\right)^{-1}=0,
\end{align*}
which implies the desired result.
\end{proof}

The following theorem investigates the case when there is a lump-sum consumption at some stopping time $\tau$.
\begin{theorem}\label{lumpsum}
Suppose that the optimal consumption plan has a jump at a stopping time $\tau$ and the state-price density $\psi$ has right-continuous sample paths. Then, we have 
\begin{align*}
F_\tau(C)\geq M\frac{r+\beta}{\beta}\psi_\tau, \textrm{ a.s. on } \{\tau<T\},
\end{align*}
where $M$ is the Lagrange multiplier in Theorem \ref{t2}.
\end{theorem}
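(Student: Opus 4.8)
The plan is to push the first-order conditions of Theorem \ref{t2} through the same Bayes-rule change of measure used in \eqref{3}--\eqref{4}, and then to differentiate the resulting inequality at $\tau$ exactly as \eqref{5} was differentiated in the free-interval case; the only change is that equality is now available solely at the jump time. First I would record what the jump buys us: since $C^*$ increases at $\tau$, the time $\tau$ lies in the support of $dC^*$, so the flat-off condition (iii) of Theorem \ref{t2} forces $\nabla V(C^*)(\tau)=M\psi_\tau$ a.s. on $\{\tau<T\}$, whereas (ii) keeps $\nabla V(C^*)(t)\le M\psi_t$ for every $t$. Writing $\rho_t=\frac{d\P^*}{d\P_0}\big|_{\mathcal{F}_t}$, $g_s=F_s(C)\,\beta e^{-\beta s}\rho_s^{-1}$ and $B_t=\E^*_t[\int_t^T g_s\,ds]$, the computation \eqref{3}--\eqref{4} gives $B_t=e^{-\beta t}\rho_t^{-1}\nabla V(C^*)(t)$, so the first-order conditions translate into
\[
B_t\le Me^{-(r+\beta)t}\ \text{ for all }t,\qquad B_\tau=Me^{-(r+\beta)\tau}\ \text{ on }\{\tau<T\}.
\]

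Next I would set up a martingale comparison. With $A_t=\int_0^t g_s\,ds$ (continuous, increasing) and $N_t=\E^*_t[\int_0^T g_s\,ds]$ (a $\P^*$-martingale, uniformly integrable because $B_0\le M$ makes $\int_0^T g_s\,ds\in L^1(\P^*)$), one has $B_t=N_t-A_t$, hence
\[
D_t:=Me^{-(r+\beta)t}-B_t=Me^{-(r+\beta)t}-N_t+A_t\ge0,\qquad D_\tau=0\ \text{ on }\{\tau<T\}.
\]
For the bounded stopping times $\sigma_\varepsilon:=(\tau+\varepsilon)\wedge T\ge\tau$, optional sampling removes the martingale increment, $\E^*_\tau[N_{\sigma_\varepsilon}-N_\tau]=0$, so $\E^*_\tau[D_{\sigma_\varepsilon}]\ge0=D_\tau$ yields
\[
\E^*_\tau\Big[\int_\tau^{\sigma_\varepsilon}\big(g_s-M(r+\beta)e^{-(r+\beta)s}\big)\,ds\Big]\ge0,\qquad\varepsilon>0.
\]

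Finally I would divide by $\varepsilon$ and let $\varepsilon\downarrow0$. On $\{\tau<T\}$ the deterministic part tends to $M(r+\beta)e^{-(r+\beta)\tau}$ by ordinary Lebesgue differentiation, and it remains to show $\varepsilon^{-1}\E^*_\tau[\int_\tau^{\tau+\varepsilon}g_s\,ds]\to g_\tau$. This is where the hypothesis enters: right-continuity of $\psi$ (equivalently of $\rho$), together with the c\`adl\`ag regularity of $Y^C,U^C$ and the continuity of $\partial_y f$, makes $g$ right-continuous at $\tau$, so $\varepsilon^{-1}\int_\tau^{\tau+\varepsilon}g_s\,ds\to g_\tau$ pathwise; interchanging this limit with $\E^*_\tau[\cdot]$ then gives $g_\tau\ge M(r+\beta)e^{-(r+\beta)\tau}$ on $\{\tau<T\}$, and multiplying by the positive $\mathcal{F}_\tau$-measurable factor $\beta^{-1}e^{\beta\tau}\rho_\tau$ turns this into the asserted $F_\tau(C)\ge M\frac{r+\beta}{\beta}\psi_\tau$.

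The main obstacle is precisely this last interchange, a Lebesgue-type differentiation of the conditional expectation $B$ at the stopping time $\tau$. I would handle it by a conditional dominated-convergence argument: the averages $\varepsilon^{-1}\int_\tau^{\tau+\varepsilon}g_s\,ds$ converge to $g_\tau$ a.s. by right-continuity, and they are uniformly integrable under $\P^*$ once one has enough integrability of $g$, which I would extract from the $L^p$-moment bounds on $U^C$ and $Y^C$ furnished by Theorem \ref{propertyofutility} (recall $2\alpha p<1$) combined with the integrability of $d\P_0/d\P^*$ in Assumption \ref{a1}(iv); almost sure convergence plus uniform integrability then upgrade to $L^1(\P^*)$-convergence, along which the conditional expectations converge. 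Right-continuity of $\psi$ cannot be dropped here: if $\psi$ jumped from the left at $\tau$, the pathwise limit of the averages would involve $\rho_{\tau+}^{-1}$ rather than $\rho_\tau^{-1}$, and one could recover only an inequality for the right-hand limit, not for $F_\tau(C)$ itself.
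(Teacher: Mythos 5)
Your proposal starts exactly as the paper does: the flat-off condition (iii) of Theorem \ref{t2} gives $\nabla V(C^*)(\tau)=M\psi_\tau$ on the jump set, the Bayes computation \eqref{3}--\eqref{4} converts the first-order conditions into $B_t\le Me^{-(r+\beta)t}$ for all $t$ with equality at $\tau$, and comparing the equality at $\tau$ with the inequality at a later stopping time yields $\E^*_\tau\left[\int_\tau^{\cdot}\left(g_s-M(r+\beta)e^{-(r+\beta)s}\right)ds\right]\ge 0$; all of this is sound. The divergence, and the genuine gap, is in your final step. Taking the later time to be $(\tau+\varepsilon)\wedge T$, dividing by $\varepsilon$ and letting $\varepsilon\downarrow 0$, you must interchange the limit with $\E^*_\tau[\,\cdot\,]$ \emph{in the direction that preserves the inequality}, i.e.\ you need reverse Fatou: $\limsup_\varepsilon \E^*_\tau[Z_\varepsilon]\le \E^*_\tau[\lim_\varepsilon Z_\varepsilon]$, which requires a $\P^*$-integrable \emph{upper} bound on the averages $Z_\varepsilon$ (the cheap lower bound $Z_\varepsilon\ge -M(r+\beta)$ only gives ordinary Fatou, which produces an inequality in the useless direction). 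With your notation $\rho_t=\frac{d\P^*}{d\P_0}\big|_{\mathcal{F}_t}$, the natural dominating variable is $\beta e^{KT}\sup_{t\in[0,T]}\left\{\partial_y f\left(t,Y^C_t,U^C_t\right)\rho_t^{-1}\right\}$, and its $\P^*$-integrability is precisely the extra hypothesis that the paper imposes in Theorem \ref{nosurprise} --- and deliberately does \emph{not} impose in Theorem \ref{lumpsum}. Your claim that this domination follows from the moment bounds of Theorem \ref{propertyofutility} together with Assumption \ref{a1}(iv) is unsubstantiated: those results control moments of $Y^C$, $U^C$, $C_T$ and of $d\P_0/d\P^*$, but nothing in Assumption \ref{a1} controls $\partial_y f(t,y,u)$ as $y\downarrow 0$ (for $f$ of power type, $\partial_y f=\alpha y^{\alpha-1}$ explodes there), and $Y^C$ has no positive lower bound when $\eta=0$. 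In effect, your step three reproduces the limiting argument of Theorem \ref{nosurprise}, which is exactly the place where the authors found they needed that additional integrability assumption.

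The paper closes the argument without any limit interchange, which is why Theorem \ref{lumpsum} needs no such hypothesis. For fixed $\varepsilon>0$ it takes the later time to be the hitting time $\sigma=\inf\left\{t\ge\tau: F_t(C)-M\frac{r+\beta}{\beta}\psi_t\ge-\varepsilon\right\}$; by the very definition of $\sigma$, the integrand is pointwise bounded above by $-\varepsilon\beta e^{-\beta s}\rho_s^{-1}$ on $[\tau,\sigma)$, so the inequality $\E^*_\tau\left[\int_\tau^{\sigma}\left(g_s-M(r+\beta)e^{-(r+\beta)s}\right)ds\right]\ge 0$ forces $\sigma=\tau$ a.s.; right-continuity of $F_\cdot(C)$ and $\psi$ (the only place that hypothesis enters) then yields $F_\tau(C)-M\frac{r+\beta}{\beta}\psi_\tau\ge-\varepsilon$, and $\varepsilon\downarrow 0$ concludes. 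To repair your proof you would either have to add the integrability assumption of Theorem \ref{nosurprise}, or replace the deterministic increment $\tau+\varepsilon$ by a stopping-time localization of this kind.
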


\begin{proof}
Let $\tau$ be a stopping time with $\Delta C_\tau>0$ a.s. on $\tau<T$. From now on, we work on the set $\{\tau<T\}$. Fix $\varepsilon>0$. Let $\sigma:=\inf\{t\geq \tau: F_t(C)-M\frac{r+\beta}{\beta}\psi_t\geq -\varepsilon\}$. Similar as \eqref{4}, the first-order conditions at time $\tau$ and $\sigma$ imply that 
\begin{align*}
&\E^*_\tau\left[\int_\tau^T F_s(C)\beta e^{-\beta s}\left(\frac{d\P^*}{d\P_0}\Big|_{\mathcal{F}_s}\right)^{-1}ds\right]=M e^{-(r+\beta)\tau},\\
&\E^*_\sigma\left[\int_\sigma^T F_s(C)\beta e^{-\beta s}\left(\frac{d\P^*}{d\P_0}\Big|_{\mathcal{F}_s}\right)^{-1}ds\right]\leq M e^{-(r+\beta)\sigma},
\end{align*}
which are equivalent to 
\begin{align*}
&\E^*_\tau\left[\int_\tau^T \left(F_s(C)\beta e^{-\beta s}\left(\frac{d\P^*}{d\P_0}\Big|_{\mathcal{F}_s}\right)^{-1}-M(r+\beta)e^{-(r+\beta )s}\right)ds\right]=M e^{-(r+\beta)T},\\
&\E^*_\sigma\left[\int_\sigma^T \left(F_s(C)\beta e^{-\beta s}\left(\frac{d\P^*}{d\P_0}\Big|_{\mathcal{F}_s}\right)^{-1}-M(r+\beta)e^{-(r+\beta )s}\right)ds\right]\leq M e^{-(r+\beta)T}.
\end{align*}
We obtain by taking conditional expectation at time $\tau$ of their differences
\begin{align*}
0 & \leq \E^*_\tau\left[\int_\tau^\sigma\left(F_s(C)\beta e^{-\beta s}\left(\frac{d\P^*}{d\P_0}\Big|_{\mathcal{F}_s}\right)^{-1}-M(r+\beta)e^{-(r+\beta )s}\right)ds\right]\\
& \leq -\varepsilon\E^*_\tau\left[\int_\tau^\sigma \beta e^{-\beta s}\left(\frac{d\P^*}{d\P_0}\Big|_{\mathcal{F}_s}\right)^{-1}ds\right],
\end{align*}
where we have used the definition of $\sigma$ in the last inequality. Therefore, we have $\sigma=\tau$, i.e., for any $\varepsilon$, $F_\tau(C)-M\frac{r+\beta}{\beta}\psi_\tau\geq -\varepsilon$. Since $\varepsilon>0$ can be chosen arbitrarily, we obtain that 
\begin{align*}
F_\tau(C)\geq M\frac{r+\beta}{\beta}\psi_\tau, \textrm{ a.s. on } \{\tau<T\}.
\end{align*}
\hfill
\end{proof}

Motivated by the irreversible investment model studied by \cite{RS}, we introduce the notion of surprise for a deterministic time $t$.
We say that the model has a surprise at a deterministic time $t\in[0,T]$ if either $\mathcal{F}_{t-}\neq \mathcal{F}_t$ or $\P(\Delta \psi_t\neq 0)>0$ (i.e., $\P\left(\Delta \frac{d\P^*}{d\P_0}\Big|_{\mathcal{F}_t}\neq 0\right)>0$).

The following theorem states that lump-sum consumption occurs only in response to surprises.

\begin{theorem}\label{nosurprise}
Suppose that  
$$
\E^*\left[\sup_{t\in[0,T]}\left\{\partial_y f(t,Y^C_t, U^C_t)\left(\frac{d\P^*}{d\P_0}\Big|_{\mathcal{F}_t}\right)^{-1}\right\}\right]<\infty.
$$
If the model has no surprise at time $t\in[0,T]$, then there is no lump-sum consumption at $t$, i.e., $\P(\Delta C_t\neq 0)=0$. 
\end{theorem}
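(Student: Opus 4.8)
The plan is to argue by contradiction: assuming $\P(\Delta C_{t_0}>0)>0$ at a time $t_0<T$ with no surprise, I will squeeze the left and right limits of $F_\cdot(C)$ at $t_0$ against the common value $M\frac{r+\beta}{\beta}\psi_{t_0}$ and contradict the strict drop of $F$ forced by the lump. (A lump at the terminal time $T$ is excluded separately, since consumption at $s=T$ changes $Y^C$ only on the null time-set $\{T\}$ and thus yields no utility while consuming budget, so it is never optimal.) Throughout I abbreviate $F_s:=F_s(C)$, $H_s:=\beta e^{-\beta s}(\tfrac{d\P^*}{d\P_0}|_{\mathcal{F}_s})^{-1}$ and $G_s:=M(r+\beta)e^{-(r+\beta)s}$, and I record two consequences of the first-order conditions of Theorem \ref{t2}, obtained exactly as in the chain \eqref{2}--\eqref{4}: writing $A_t:=\E^*_t[\int_t^T F_sH_s\,ds]$ and $R_t:=Me^{-(r+\beta)t}$, one has $A_t\le R_t$ for every $t$, with equality at every point of increase of $C$. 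The integrability hypothesis of the theorem guarantees $\int_0^T F_sH_s\,ds\in L^1(\P^*)$ (since $|\partial_u f|\le K$ bounds the exponential factor in $F$), so that $N_t:=\int_0^t F_sH_s\,ds+A_t=\E^*_t[\int_0^T F_sH_s\,ds]$ is a uniformly integrable $\P^*$-martingale.

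The right-hand bound is immediate from Theorem \ref{lumpsum}: since the density process, hence $\psi$, is c\`adl\`ag and there is a lump at $t_0$, we get $F_{t_0}\ge M\frac{r+\beta}{\beta}\psi_{t_0}$ a.s. on $\{\Delta C_{t_0}>0\}$. For the left-hand bound I use the no-surprise hypothesis twice. First, $\mathcal{F}_{t_0-}=\mathcal{F}_{t_0}$ forces $N_{t_0-}=N_{t_0}$; combined with $A_{t_0}=R_{t_0}$ (a lump is a point of increase) and the continuity of $t\mapsto\int_0^t F_sH_s\,ds$, this yields $A_{t_0-}=R_{t_0}$. Applying the martingale identity $\E^*[N_{t_0-}\mid\mathcal{F}_{t_0-\delta}]=N_{t_0-\delta}$ together with $A_{t_0-\delta}\le R_{t_0-\delta}$ then gives, for every small $\delta>0$, $\E^*[\int_{t_0-\delta}^{t_0}F_sH_s\,ds\mid\mathcal{F}_{t_0-\delta}]\le\int_{t_0-\delta}^{t_0}G_s\,ds$. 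Dividing by $\delta$, sending $\delta\downarrow0$ along a sequence, and invoking Hunt's lemma --- legitimate because $s\mapsto F_sH_s$ has left limit $F_{t_0-}H_{t_0-}$ at $t_0$, $\mathcal{F}_{t_0-\delta}\uparrow\mathcal{F}_{t_0-}$, and the averages are dominated by $\sup_sF_sH_s\in L^1(\P^*)$ --- I obtain $F_{t_0-}H_{t_0-}\le G_{t_0}$, i.e. $F_{t_0-}\le M\frac{r+\beta}{\beta}\psi_{t_0-}$.

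Finally I quantify the jump of $F$ at the lump. The factor $\exp(\int_0^{t_0}\partial_uf\,dr)$ is continuous, and under $\mathcal{F}_{t_0-}=\mathcal{F}_{t_0}$ the utility $U^C$ is continuous at $t_0$ (the $\P_0$-martingale $\E_t[\int_0^Tf(s,Y^C_s,U^C_s)\,ds]=U^C_t+\int_0^tf\,ds$ has no jump at a deterministic time whose left $\sigma$-field is full, and the $dt$-term is continuous). The lump makes $Y^C$ jump up by $\beta\Delta C_{t_0}>0$, so strict concavity of $f$ in $y$ gives $\partial_yf(t_0,Y^C_{t_0},U^C_{t_0})<\partial_yf(t_0,Y^C_{t_0-},U^C_{t_0})$ and hence $F_{t_0}<F_{t_0-}$. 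Since the second part of the no-surprise hypothesis gives $\psi_{t_0-}=\psi_{t_0}$, the three estimates combine into $M\frac{r+\beta}{\beta}\psi_{t_0}\le F_{t_0}<F_{t_0-}\le M\frac{r+\beta}{\beta}\psi_{t_0}$, a contradiction; therefore $\P(\Delta C_{t_0}\neq0)=0$.

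The one genuinely delicate step is the left-hand bound. The naive candidate ``last time before $t_0$ at which $F$ is large'' is a supremum over the past and hence not a stopping time, so the optional first-order conditions cannot be applied directly as they were from the right in Theorem \ref{lumpsum}. The martingale-increment computation over $[t_0-\delta,t_0)$ followed by Hunt's lemma is the device that circumvents this, and it is precisely here that the integrability assumption of the theorem is indispensable, both to make $N$ a true $\P^*$-martingale and to supply the $L^1(\P^*)$ domination needed to pass to the limit inside the conditional expectation.
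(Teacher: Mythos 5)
Your proof has the same skeleton as the paper's: argue by contradiction on the lump event $A=\{\Delta C_{t_0}>0\}$, get $F_{t_0}\geq M\frac{r+\beta}{\beta}\psi_{t_0}$ on $A$ from Theorem \ref{lumpsum}, upgrade it to the strict drop $F_{t_0-}>F_{t_0}$ via strict concavity of $f$ in $y$ together with continuity of $U^C$ at a no-surprise time (a point you justify more carefully than the paper does), use $\psi_{t_0-}=\psi_{t_0}$, and contradict an upper bound involving $F_{t_0-}$; the integrability hypothesis serves the same domination purpose in both arguments. The genuine gap is your left-hand bound $F_{t_0-}\leq M\frac{r+\beta}{\beta}\psi_{t_0-}$. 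With your notation $H_s=\beta e^{-\beta s}\bigl(\frac{d\P^*}{d\P_0}\big|_{\mathcal{F}_s}\bigr)^{-1}$, $A_t=\E^*_t[\int_t^T F_sH_s\,ds]$, $R_t=Me^{-(r+\beta)t}$, the martingale identity gives
\begin{equation*}
\E^*\left[\int_{t_0-\delta}^{t_0}F_sH_s\,ds\,\Big|\,\mathcal{F}_{t_0-\delta}\right]=A_{t_0-\delta}-\E^*\left[A_{t_0-}\,\big|\,\mathcal{F}_{t_0-\delta}\right]\leq R_{t_0-\delta}-\E^*\left[A_{t_0-}\,\big|\,\mathcal{F}_{t_0-\delta}\right],
\end{equation*}
so your displayed inequality requires $\E^*\left[A_{t_0-}\,\big|\,\mathcal{F}_{t_0-\delta}\right]\geq R_{t_0}$. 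But condition (ii) of Theorem \ref{t2} (via the Bayes-rule computation \eqref{4}) gives $A_{t_0-}=A_{t_0}\leq R_{t_0}$ almost surely, with equality \emph{only on the lump event} $A$, which is not $\mathcal{F}_{t_0-\delta}$-measurable; hence $\E^*\left[A_{t_0-}\,\big|\,\mathcal{F}_{t_0-\delta}\right]\leq R_{t_0}$, the reverse of what you need. Your argument silently discards the slack $W:=R_{t_0}-A_{t_0-}\geq 0$, which vanishes on $A$ but not off $A$; conditioning on $\mathcal{F}_{t_0-\delta}$ spreads this off-$A$ slack back onto $A$, and the resulting term $\frac{1}{\delta}\E^*\left[W\,\big|\,\mathcal{F}_{t_0-\delta}\right]$ has no controlled rate as $\delta\downarrow 0$. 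So neither the pre-limit inequality nor its Hunt-lemma limit is available, and the chain $M\frac{r+\beta}{\beta}\psi_{t_0}\leq F_{t_0}<F_{t_0-}\leq M\frac{r+\beta}{\beta}\psi_{t_0}$ collapses. Indeed, a pointwise bound on $F_{t_0-}$ is not what the first-order conditions deliver: off free intervals they control only conditional expectations of future $F$, never $F$ itself.

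The paper avoids exactly this trap by never asserting a pointwise or conditional-expectation bound at $t_0-$: everything is kept averaged against $I_A$, where the binding condition at $t_0$ is actually available. Taking $t_n\uparrow t_0$, it combines the equality $A_{t_0}=R_{t_0}$ on $A$ with the unconditional inequality $A_{t_n}\leq R_{t_n}$ into the averaged inequality \eqref{H1}; it then proves $\E^*_{t_n}[Z_n]\to Z$ a.s.\ and in $L^1(\P^*)$ (its \eqref{H2}, where the no-surprise hypothesis and your same integrability assumption enter), with $Z_n$ the time-average of $F_sH_s-M(r+\beta)e^{-(r+\beta)s}$ over $[t_n,t_0]$ and $Z$ its left-limit value, and derives the contradiction $0<\E^*[I_AZ]\leq\liminf_n\E^*[I_AZ_n]\leq 0$ from the decomposition $\E^*[I_AZ_n]=\E^*[I_A(Z_n-\E^*_{t_n}[Z_n])]+\E^*[I_A\E^*_{t_n}[Z_n]]$ together with Fatou's lemma. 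If you want to salvage your write-up, renounce the pointwise left bound and run your martingale/Hunt computation inside $\E^*[I_A\,\cdot\,]$; doing so brings you essentially back to the paper's proof.
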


\begin{proof}
Suppose that $\P(A)>0$, where $A=\{\Delta C_t>0\}$. Since the first-order conditions are binding on $A$, recalling \eqref{4}, we have
\begin{align*}
\E^*_t\left[\int_t^T F_s(C)\beta e^{-\beta s}\left(\frac{d\P^*}{d\P_0}\Big|_{\mathcal{F}_s}\right)^{-1}ds\right]=M e^{-(r+\beta)t}, \textrm{ on } A.
\end{align*} 
Let $\{t_n\}_{n\in\mathbb{N}}$ be such that $t_n\uparrow t$. It follows again from the first-order conditions that 
\begin{align*}
\E^*_{t_n}\left[\int_{t_n}^T F_s(C)\beta e^{-\beta s}\left(\frac{d\P^*}{d\P_0}\Big|_{\mathcal{F}_s}\right)^{-1}ds\right]\leq M e^{-(r+\beta)t_n}.
\end{align*}
By taking differences and the expectation, we get (similar as the third equation in the proof of Theorem 6.5 in \cite{RS})
\begin{align*}
\E^*\left[I_A\E^*_{t_n}\left[\int_{t_n}^t F_s(C)\beta e^{-\beta s}\left(\frac{d\P^*}{d\P_0}\Big|_{\mathcal{F}_s}\right)^{-1}ds\right]\right]\leq M\E^*\left[I_A(e^{-(r+\beta)t_n}-e^{-(r+\beta)t})\right].
\end{align*}
Therefore, we have
\begin{align}\label{H1}
\E^*\left[I_A\E^*_{t_n}\left[\int_{t_n}^t \left(F_s(C)\beta e^{-\beta s}\left(\frac{d\P^*}{d\P_0}\Big|_{\mathcal{F}_s}\right)^{-1}-M(r+\beta)e^{-(r+\beta)s}\right)ds\right]\right]\leq 0.
\end{align}
Noting that at time $t$, we have $\lim_{s\uparrow t} \left(\frac{d\P^*}{d\P_0}\Big|_{\mathcal{F}_s}\right)^{-1}=\left(\frac{d\P^*}{d\P_0}\Big|_{\mathcal{F}_t}\right)^{-1}$ and $\lim_{s\uparrow t}C_s=C_{t-}$, a.s. It follows that $\lim_{s\uparrow t}Y^C_s=Y^C_{t-}$ and $\lim_{n\rightarrow \infty}Z_n=Z$, a.s., where
\begin{align*}
&Z_n:=\frac{1}{t-t_n}\int_{t_n}^t \left(F_s(C)\beta e^{-\beta s}\left(\frac{d\P^*}{d\P_0}\Big|_{\mathcal{F}_s}\right)^{-1}-M(r+\beta)e^{-(r+\beta)s}\right)ds,\\
&Z:=F_{t-}(C)\beta e^{-\beta t}\left(\frac{d\P^*}{d\P_0}\Big|_{\mathcal{F}_t}\right)^{-1}-M(r+\beta)e^{-(r+\beta)t},\\
&F_{t-}(C)=\exp\left(\int_0^t \partial_u f(r,Y_r^C,U_r^C)dr\right)\partial_y f(t,Y_{t-}^C,U_{t-}^C).
\end{align*}
It is easy to check that 
$$-M(r+\beta)\leq Z_n\leq \beta e^{KT}\sup_{t\in[0,T]}\left\{\partial_y f(t,Y^C_t, U^C_t)\left(\frac{d\P^*}{d\P_0}\Big|_{\mathcal{F}_t}\right)^{-1}\right\}=:\bar{Z},$$
where $K$ is the Lipschitz constant of $f$ w.r.t $u$. Since $\bar{Z}$ is $\P^*$-integrable, the convergence $Z_n\rightarrow Z$ also holds in $L^1(\P^*)$. By a martingale convergence argument and noting that $\mathcal{F}_{t-}=\mathcal{F}_t$, we have
\begin{align}\label{H2}
\lim\E^*_{t_n}[Z_{n}]=\E^*_{t-}[Z]=Z \textrm{ a.s. and in } L^1(\P^*).
\end{align}
Set $\tau:=t I_A+T I_{A^c}$. Since $Y^C_t> Y^C_{t-}$ on $A$, by Theorem \ref{lumpsum} and recalling that $f$ is strictly concave, we have
\begin{align*}
F_{t-}(C)>F_t(C)\geq M\frac{r+\beta}{\beta}\psi_t, \textrm{ on } A,
\end{align*}
which is equivalent to $Z>0$ on $A$. By Fatou's lemma and \eqref{H1}, \eqref{H2}, we finally obtain that 
\begin{align*}
0&<\E^*[I_A Z]=\E^*[I_A \lim_{n\rightarrow \infty}Z_n]\leq \liminf_{n\rightarrow \infty}\E^*[I_A Z_n]\\
&=\liminf_{n\rightarrow \infty}\E^*[I_A (Z_n-\E^*_{t_n}[Z_n])]+\liminf_{n\rightarrow \infty}\E^*[I_A \E^*_{t_n}[Z_n]]\leq 0,
\end{align*}
which is a contradiction.
\end{proof}

%Recall that in the time-additive case studied by \cite{BR}, if the state price density is characterized by a geometric Brownian motion, the optimal consumption behaves in a singular way. That is, there is no free interval.

\section{A Case Study}

%\subsubsection{Pure Poisson Setting}
In this section, we are going to provide a more explicit structure of optimal consumption plans. For this purpose, we suppose that the level of satisfaction is given by \eqref{YCt} and the state price density is characterized by the  geometric Poisson process 
\begin{align*}
    \psi_t=\exp\left\{\theta N_t-(r+\lambda(e^\theta-1))t\right\},
\end{align*}
where $\theta>0$ and $N$ is a Poisson process with intensity $\lambda>0$ under the probability $\P_0$. The following assumption for the felicity function $f$ and the parameters $r,\beta,\lambda,\theta$ will play a crucial role in the analysis.

\begin{assumption}\label{a4}
    Suppose that for any $(t,y,u)\in[0,T]\times \mathbb{R}_+\times \mathbb{R}$, we have \begin{equation}\label{parameterconstraints}
     \mathfrak{L}f(t,y,u):=(r+\lambda(e^\theta-1))\partial_y f-\beta y \partial_y^2f+\partial_t\partial_y f+\partial_u f\partial_y f-f\partial_u\partial_y f> 0.  
    \end{equation}
\end{assumption}

Let us give an economic interpretation of the assumption. 
Suppose that the aggregator $f$ does not depend on $t$. Define
\begin{align*}
    \rho^f:=\frac{f\partial_u \partial_y f}{\partial_y f}-\partial_u f, \ \epsilon^f:=-\frac{\partial_y f}{y\partial_y^2 f}.
\end{align*}
$\epsilon^f$ denotes the intertemporal elasticity of substitution, while $\rho^f$, as demonstrated in \cite{E87}, represents the endogenous discount rate. The condition $\mathfrak{L}f(y,u)\leq 0$ is equivalent to 
\begin{align*}
    \rho^f\geq r+\lambda(e^\theta-1)+\frac{\beta}{\epsilon^f},
\end{align*}
which amounts to say that the endogenous discount rate is no less than the sum of the interest rate, the market risk premium and the ratio between the depreciation rate and the intertemporal elasticity of substitution. Now consider  the important special case of the  Epstein-Zin felicity function
\begin{equation}\label{barf}
    {f}(y,u)=\frac{\delta}{1-\frac{1}{{\alpha}}}y^{1-\frac{1}{{\alpha}}}[(1-{\rho})u]^{1-\frac{1}{\sigma}}-\delta\sigma u,
\end{equation}
where $\sigma=(1-{\rho})(1-\frac{1}{{\alpha}})^{-1}$. Here, $\rho$ is the agent's relative risk aversion. Besides, we have $\epsilon^f=\alpha$ and $\rho^f=\delta$ (see Lemma 4.6 in \cite{LRY}).  Condition $\mathfrak{L}f(y,u)\leq 0$ simplifies to 
$$\delta \geq \lambda(e^\theta-1)+r+\frac{\beta}{\alpha}.$$
%Proposition \ref{lumpconsumptionat0} indicates that if the agent is quite impatient (i.e., the discount rate is high), he optimally consumes all his wealth at time $0$.

We first consider  the case when Assumption \ref{a4} does not hold. In that case,  it is optimal  to consume all wealth $w$ at time $0$.
\begin{proposition}\label{lumpconsumptionat0}
    Suppose that for any $(t,y,u)\in[0,T]\times \mathbb{R}_+\times \mathbb{R}$, we have 
    \begin{align*}
     \mathfrak{L}f(t,y,u)\leq 0.  
    \end{align*}
    Then, the optimal consumption plan $C$ is given by $C_t=w$, $t\in[0,T]$. 
   % \fr{Compare to Remark 4.4 in the JME paper. Agent is very impatient relative to interest rate, depreciation and ...}
\end{proposition}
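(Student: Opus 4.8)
The plan is to verify that the candidate plan $C_t\equiv w$ (a single lump of size $w$ at time $0$) satisfies the first-order conditions (i)--(iii) of Theorem \ref{t2} for a suitable multiplier $M>0$; sufficiency then yields optimality, and since $\theta_{t,s}=\beta e^{\beta s}e^{-\beta t}$ is separable, Theorem \ref{t1} guarantees this is the unique optimal plan. The decisive observation is that for this plan all randomness disappears. Indeed, $dC$ is the point mass $w\delta_0$, so \eqref{YCt} gives the deterministic path $Y^C_t=(\eta+\beta w)e^{-\beta t}=:y_0e^{-\beta t}$. The BSDE \eqref{BSDE} for $U^C$ is then solved by $\bar Z=\bar\Psi=0$ together with the deterministic $u$ solving $u'(t)=-f(t,y_0e^{-\beta t},u(t))$, $u(T)=0$, so by uniqueness $U^C_t=u(t)$ is deterministic. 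Hence $F_t(C)$ from \eqref{1} and $\nabla V(C)(t)=e^{\beta t}\int_t^T F_s(C)\beta e^{-\beta s}\,ds$ are deterministic functions of $t$ alone.

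Conditions (i) and (iii) are then immediate. Since $\psi_0=1$ and $dC$ charges only $t=0$, we get $\E[\int_0^T\psi_t\,dC_t]=w$, which is (i); the flat-off condition (iii) forces $\nabla V(C)(0)=M\psi_0=M$, so I would simply define $M:=\nabla V(C)(0)=\int_0^T F_s(C)\beta e^{-\beta s}\,ds>0$. The whole content is condition (ii): $\nabla V(C)(t)\le M\psi_t$ for all $t$, a.s. As $\nabla V(C)(t)$ is deterministic and $\psi_t=\exp\{\theta N_t-(r+\lambda(e^\theta-1))t\}$ with $\theta>0$, $N_t\ge0$, dividing by the common factor $e^{-(r+\lambda(e^\theta-1))t}$ reduces (ii) to $g(t)\le Me^{\theta N_t}$ where $g(t):=e^{(r+\lambda(e^\theta-1))t}\nabla V(C)(t)$; since $e^{\theta N_t}\ge1$, it suffices to prove the deterministic bound $g(t)\le g(0)=M$ on $[0,T]$.

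The heart of the proof is to show $g$ is nonincreasing, and this is exactly where $\mathfrak{L}f\le0$ is used. With $p(t):=\int_t^T F_s(C)\beta e^{-\beta s}\,ds$ and $a:=r+\lambda(e^\theta-1)+\beta$, one has $g(t)=e^{at}p(t)$, hence $g'(t)=e^{at}q(t)$ with $q(t):=ap(t)-\beta e^{-\beta t}F_t(C)$. I would then differentiate $q$, using $\tfrac{d}{dt}Y^C_t=-\beta Y^C_t$, $\tfrac{d}{dt}U^C_t=-f(t,Y^C_t,U^C_t)$ and $\tfrac{d}{dt}\exp(\int_0^t\partial_uf\,dr)=\partial_uf\exp(\int_0^t\partial_uf\,dr)$, and collect the terms arising from the product rule to obtain the identity $(r+\lambda(e^\theta-1))F_t(C)+\tfrac{d}{dt}F_t(C)=\exp(\int_0^t\partial_uf\,dr)\,\mathfrak{L}f(t,Y^C_t,U^C_t)$. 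This gives $q'(t)=-\beta e^{-\beta t}\exp(\int_0^t\partial_uf\,dr)\,\mathfrak{L}f(t,Y^C_t,U^C_t)\ge0$ under the hypothesis, so $q$ is nondecreasing. Since $q(T)=-\beta e^{-\beta T}F_T(C)<0$ (because $\partial_yf>0$), we conclude $q(t)\le q(T)<0$, whence $g'<0$ and $g(t)\le g(0)=M$, establishing (ii) and thus optimality.

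I expect the one genuinely delicate step to be the differentiation identity for $F_t(C)$: the product rule produces precisely the four terms $\partial_t\partial_yf$, $-\beta y\,\partial_y^2f$, $\partial_uf\,\partial_yf$ and $-f\,\partial_u\partial_yf$, which must be recognized as reassembling the operator $\mathfrak{L}f$ of Assumption \ref{a4} — indeed $\mathfrak{L}$ is defined exactly so that this holds along the deterministic trajectory $(Y^C,U^C)$. Everything else — existence and uniqueness of the scalar ODE for $u$, positivity of $F_T(C)$, and the reduction to the worst case $N_t=0$ — is routine.
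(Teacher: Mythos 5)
Your proposal is correct and follows essentially the same route as the paper: the paper's proof also reduces to showing that $\widetilde{F}_t(C)=\nabla V(C)(t)\exp((r+\lambda(e^\theta-1))t)$ (your $g$) is nonincreasing, establishes this from the same differentiation identity in which the product-rule terms reassemble into $\mathfrak{L}f$ evaluated along the deterministic trajectory (the paper phrases it as $\widetilde{F}''_t=a\widetilde{F}'_t-\beta X_tF^1_t\mathfrak{L}f$ and integrates backward from $\widetilde{F}'_T=-\beta X_TF_T\le 0$, which is exactly your statement that $q=e^{-at}g'$ is nondecreasing with $q(T)<0$), and then defines $M:=\widetilde{F}_0(C)$ and uses $e^{\theta N_t}\ge 1$ to verify the first-order conditions of Theorem \ref{t2}.
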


\begin{proof}%[Proof of Proposition \ref{lumpconsumptionat0}]
Let $Y^C$ and $U^C$ be the level of satisfaction and utility induced by $C$, respectively. In this case, we have
\begin{align*}
    Y^C_t=(\beta w+\eta)e^{-\beta t}.
\end{align*}
Clearly, $Y^C$ is deterministic, which implies that the utility $U^C$ is also deterministic. Therefore, we have 
\begin{equation*}\label{dUtdetermine}
        dU^C_t=-f(t,Y^C_t,U^C_t)dt.
\end{equation*}
Set $\widetilde{F}_t(C)=\nabla V(C)(t)\exp((r+\lambda(e^\theta-1))t)$. We claim that $\{\widetilde{F}_t(C)\}_{t\in[0,T]}$ is nonincreasing in $t$. For simplicity, we omit the brackets in $\widetilde{F}$ and $F$. By a simple calculation and recalling the definition for $\nabla V(C)(t)$ and $F_t(C)$ (see Eqs. \eqref{nablaV} and \eqref{1}), we have 
\begin{equation}\label{widetildeF}\begin{split}
&\widetilde{F}'_t=\beta(r+\lambda(e^\theta-1)+\beta)\widetilde{F}_t-\beta X_tF_t,\\
&\widetilde{F}''_t=\beta(r+\lambda(e^\theta-1)+\beta)\widetilde{F}'_t-\beta X_t F_t^1\mathfrak{L}f(t,Y^C_t,U^C_t),
\end{split}\end{equation}
where
\begin{align*}
   X_t=\exp\left((r+\lambda(e^\theta-1))t\right), \  F^1_t(C)=\exp\left(\int_0^t \partial_u f(r,Y_r^C,U_r^C)dr\right).
\end{align*}
Set $a=\beta(r+\lambda(e^\theta-1)+\beta)$ and $
    b_t=\beta X_t F_t^1\mathfrak{L}f(t,Y^C_t,U^C_t)$. By the second equation in \eqref{widetildeF}, we have
\begin{align*}
    e^{-at}\widetilde{F}'_t=e^{-aT}\widetilde{F}'_T+\int_t^T e^{-as}b_sds.
\end{align*}
Noting that $\widetilde{F}'_T=-\beta X_TF_T \leq 0$ and $b_t\leq 0$, it follows that $\widetilde{F}'_t\leq 0$. Therefore, $\{\widetilde{F}_t(C)\}_{t\in[0,T]}$ is nonincreasing in $t$.

Now, we define 
\begin{align*}
    M:=\widetilde{F}_0(C).
\end{align*}
It is easy to check that $\nabla V(C)(0)=M\psi_0$ and for any $t\in(0,T]$, we have
\begin{align*}
    \nabla V(C)(t)\leq M X_t^{-1}\leq M\psi_t.
\end{align*}
All the above analysis indicates that the first-order conditions listed in Theorem \ref{t2} are satisfied for $C\equiv w$ with Lagrange multiplier $\widetilde{F}_0(C)$. Therefore, the optimal plan is to consume all wealth $w$ at time $0$.
\end{proof}

In the following, suppose that Assumption \ref{a4} holds. Let $\sigma_i$ be the point of jump for $N$, $i=1,2,\cdots$. By Theorem \ref{nosurprise}, the free interval $[\tau_0,\tau_1]$ must be contained in some $(\sigma_i,\sigma_{i+1})$, $i=1,2,\cdots$. For any $t\in (\tau_0,\tau_1)\subset (\sigma_i,\sigma_{i+1})$, we have
\begin{align*}
    \psi_t=\exp\left\{\theta N_{\sigma_i}-(r+\lambda(e^\theta-1))t\right\}.
\end{align*}
Therefore, for $t\in(\tau_0,\tau_1)$ we obtain that 
\begin{align*}
    d\psi_t=-(r+\lambda(e^\theta-1))\exp\left\{\theta N_{\sigma_i}-(r+\lambda(e^\theta-1))t\right\}dt=-(r+\lambda(e^\theta-1))\psi_tdt.
\end{align*}
%We define
%\begin{align*}
 %   F^1_t(C):=\exp\left(\int_0^t \partial_u f(r,Y_r^C,U_r^C)dr\right),\ F^2_t(C):=\partial_y f(t,Y_t^C,U_t^C).
%\end{align*}
It is easy to check that 
\begin{equation*}\label{dFt}\begin{split}
    dF_t(C)
    =&F_t(C)\partial_u f(t,Y_t^C,U_t^C)dt+F^1_t(C)\Big[\partial_t\partial_y f(t,Y^C_t,U^C_t)dt\\
    &+\partial_y^2 f(t,Y^C_t,U^C_t)dY^C_t+\partial_u\partial_y f(t,Y^C_t,U^C_t)dU^C_t\Big].
\end{split}\end{equation*}
Taking derivatives on both sides of \eqref{8}, for $t\in(\tau_0,\tau_1)$, we have 
\begin{equation}\label{dFt''}\begin{split}
    -M\frac{r+\beta}{\beta}(r+\lambda(e^\theta-1))\psi_tdt
    =&F_t(C)\partial_u f(t,Y_t^C,U_t^C)dt+F^1_t(C)[\partial_t\partial_y f(t,Y^C_t,U^C_t)dt\\
    &+\partial_y^2 f(t,Y^C_t,U^C_t)dY^C_t+\partial_u\partial_y f(t,Y^C_t,U^C_t)dU^C_t].
\end{split}\end{equation}
Recalling the dynamics for $U^C$ in Remark \ref{BSDEforU} and noting that $\tilde{v}(dt,dx)=\delta_1(x)(dN_t-\lambda dt)$, we have%Noting that the filtration is generated by a Poisson process, by the martingale representation theorem, we have
\begin{equation}\label{dUt}
        dU^C_t=-f(t,Y^C_t,U^C_t)dt+\bar{Z}_tdB_t+\bar{\Psi}_t(1)(dN_t-\lambda dt).
\end{equation}
For shorter notation, we write $\bar{\Psi}_t=\bar{\Psi}_t(1)$. Let $c_t$ be the consumption rate. For $t\in(\tau_0,\tau_1)$, we have  \begin{equation}\label{NtYt}
 dN_t=0 \textrm{ and }   dY^C_t=\beta(dC_t-Y_tdt)=\beta(c_t-Y_t)dt.
\end{equation} 
%Taking derivatives on both sides of \eqref{dUt} yields that 
%\begin{align*}
%    dU^C_t=-f(t,Y^C_t,U^C_t)dt-\lambda Z_t dt.
%\end{align*} 
Plugging Eqs. \eqref{dUt} and \eqref{NtYt} into \eqref{dFt''} and using the fact that $F_t(C)=M(r+\beta)\beta^{-1}\psi_t$, we obtain that $\bar{Z}_t=0$ and 
%\begin{equation}\label{dYt'}
 %   \frac{dY^C_t}{dt}=-\frac{\partial_u f\partial_y f+\partial_t\partial_y f-(f+\lambda \bar{\Psi}_t\partial_u\partial_y f+(r+\lambda(e^\theta-1))\partial_y f}{\partial_y^2 f}.
%\end{equation}
%Recalling that $dC_t=\beta^{-1}dY_t+Y_tdt$, we have
\begin{equation}\label{consumptionrate}\begin{split}
    c_t=&-\frac{\partial_u f\partial_y f+\partial_t\partial_y f-(f+\lambda \bar{\Psi}_t)\partial_u\partial_y f+(r+\lambda(e^\theta-1))\partial_y f}{\beta\partial_y^2 f}+Y_t\\
    %=&\frac{(r+\lambda(e^\theta-1))\epsilon^f}{\beta}Y_t+Y_t-\frac{\epsilon^f\rho^f}{\beta}Y_t+\frac{\lambda \bar{\Psi}_t\partial_u\partial_y f}{\beta\partial_y^2 f}\\
    =&\frac{1}{\beta} \left[ \left( r - \rho^f\right) \epsilon^f + \beta\right] Y_t+ \frac{\lambda(e^\theta-1) \epsilon^f}{\beta} Y_t + \frac{\lambda \bar{\Psi}_t\partial_u\partial_y f}{\beta\partial_y^2 f}.
\end{split}\end{equation}
The first term in the right-hand side of \eqref{consumptionrate} coincides with the consumption rate in the deterministic setting (see Eq. (10) in \cite{LRY}). The second term is determined by the market risk premium multiplied by the intertemporal elasticity of substitution and divided by the depreciation rate. The last term is due to stochastic differential utility.

In the following, we would like to find a more explicit expression for $\bar{\Psi}$ in \eqref{consumptionrate}. To this end, let us consider the case where the felicity function is of  Epstein-Zin's type given in \eqref{barf}. 
Let $g:\mathbb{R}_+\times\mathbb{R}\rightarrow \mathbb{R}$ be Borel measurable and $M:\mathbb{R}\times\mathbb{R}\rightarrow \mathbb{R}$ be such that $M(v,\cdot)$ is of class $C^2$ for any $v\in\mathbb{R}$.  Recalling Definition 6.1 in \cite{KS}, the stochastic differential utility $\mathfrak{u}(C)$ associated to $(g,M)$  is defined by $V^C_0$, where
\begin{equation*}
    V_t^C=\E_t\left[\int_t^T \left(g(Y^C_s,V_s^C)+\frac{1}{2}A(V_s^C)Z_s^2+\int_{\mathbb{R}_*}J(V_s^C,\Psi_s(x))\theta(dx)\right)ds\right]
\end{equation*}
and $dV^C_t$ satisfies
\begin{equation*}
    dV^C_t=-\left(g(Y^C_t,V_t^C)+\frac{1}{2}A(V_t^C)Z_t^2+\int_{\mathbb{R}_*}J(V_t^C,\Psi_t(x))\theta(dx)\right)dt+Z_tdB_t+\int_{\mathbb{R}_*}\Psi_t(x)\tilde{v}(dt,dx).
\end{equation*}
Here, the variance multiplier $A$ and the jump term $J$ associated to $\mathfrak{m}$ (certainty equivalent) are defined by 
\begin{equation*}
    A(v):=\partial_w^2 M(v,w)|_{w=v} \textrm{ and } J(v,\psi):=M(v,v+\psi)-M(v,v)-\psi.
\end{equation*}
Intuitively, $A$ represents the investor's aversion towards diffusion risk, $J$ captures aversion towards jump risk. In the following, we always omit the superscript $C$.

Let %$\Phi(x)=\frac{1}{\alpha}x^\alpha$, $g(c,v)=\frac{\beta}{\rho}\frac{c^\rho-v^\rho}{v^{\rho-1}}$, $A(v)=\frac{\alpha-1}{v}$ and $M(v,w)=\frac{1}{\alpha}w^\alpha v^{1-\alpha}$.
\begin{align*}
 \Phi(x)=\frac{1}{1-\rho}x^{1-\rho}, \ g(y,v)=\frac{\delta}{1-\frac{1}{\alpha}}\left(y^{1-\frac{1}{\alpha}}v^{\frac{1}{\alpha}}-v\right), \ A(v)=-\frac{\rho}{v}, \ M(v,w)=\frac{1}{1-\rho}w^{1-\rho}v^{\rho}.   
\end{align*}
It is easy to check that $A(v)=\frac{\Phi''(v)}{\Phi'(v)}$ and $M(v,w)=\frac{\Phi(w)}{\Phi'(v)}$, which implies that $A,M$ satisfy the conditions in Corollary 7.3 and Theorem 7.4 in \cite{KS}. Let $U_t=\Phi(V_t)$. Applying It\^{o}'s formula, we obtain that 
\begin{align*}
    dU_t=&-\left(\Phi'(V_t)g(Y_t,V_t)+\frac{1}{2}\Phi'(V_t)A(V_t)Z_t^2+\Phi'(V_t)\int_{\mathbb{R}_*}J(V_t,\Psi_t(x))\theta(dx)-\frac{1}{2}\Phi''(V_t)Z_t^2\right)dt\\
    &+\int_{\mathbb{R}_*}(\Phi(V_t+\Psi_t(x))-\Phi(V_t)-\Psi_t\Phi'(V_t))\theta(dx)dt+\Phi'(V_t)Z_tdB_t\\
    &+\int_{\mathbb{R}_*}(\Phi(V_{t-}+\Psi_t)-\Phi(V_{t-}))\tilde{v}(dt,dx)\\
    =&-\frac{\delta}{1-\frac{1}{\alpha}}\left(Y_t^{1-\frac{1}{\alpha}}V_t^{\frac{1}{\alpha}-\rho}-V_t^{1-\rho}\right)dt+V_t^{-\rho}Z_t dB_t\\ &+\int_{\mathbb{R}_*}\frac{1}{1-\rho}((V_{t-}+\Psi_t)^{1-\rho}-\Phi(V_{t-})^{1-\rho})\tilde{v}(dt,dx)\\
    =&-{f}(Y_t,U_t)dt+\bar{Z}_t dB_t+\int_{\mathbb{R}_*}\bar{\Psi}_t\tilde{v}(dt,dx),
\end{align*}
where $\bar{Z}_t=V_t^{-\rho}Z_t$, $\bar{\Psi}_t=\frac{1}{1-\rho}((V_{t-}+\Psi_t)^{1-\rho}-V_{t-}^{1-\rho})$. Therefore, $U$ is exactly the utility induced by the level of satisfaction $Y$ with felicity function $f$ given by \eqref{barf}. Moreover, since for $t\in(\tau_0,\tau_1)$, the Poisson process $N$ has no jump.  $\bar{\Psi}$ in \eqref{consumptionrate} can be represented as 
\begin{align*}
    \bar{\Psi}_t=\frac{1}{1-\rho}((V_{t}+\Psi_t)^{1-\rho}-V_{t}^{1-\rho})=(J(V_{t},\Psi_t)+\Psi_t)V_{t}^{-\rho}.
\end{align*}
% and 
%\begin{align*}
 %   {f}(c,v)=\frac{\beta}{\rho}\frac{c^\rho-(\alpha v)^{\frac{\rho}{\alpha}}}{(\alpha v)^{\frac{\rho}{\alpha}-1}}.
%\end{align*}
% The the above $f$ is exactly the felicity function in \eqref{barf}.  Therefore, we could get that 
%\begin{equation}
 %   U_t=\E_t\left[\int_t^T{f}(Y_s,U_s)ds\right]=\E_t\left[\int_t^T\frac{\beta}{\rho}\frac{Y_s^\rho-(\alpha U_s)^{\frac{\rho}{\alpha}}}{(\alpha U_s)^{\frac{\rho}{\alpha}-1}}ds\right].
%\end{equation}

%From my point of view, $\bar{\Psi}_t$ given above corresponds to $Z_t$ in \eqref{dYt'} and \eqref{dCt} and $\bar{V}_t$ corresponds to $U_t$ in \eqref{dYt'} and \eqref{dCt}. 
Using the first expression of $\bar{\Psi}$, we have
\begin{equation}\label{zt1}
    \frac{ \bar{\Psi}_t\partial_u\partial_y f}{ \partial_y^2 f}=-\frac{1-{\alpha}{\rho}}{1-{\rho}}\frac{(V_{t}+\Psi_t)^{1-{\rho}}-V_{t}^{1-{\rho}}}{V_{t}^{1-{\rho}}}Y_t=-\frac{1-{\alpha}{\rho}}{1-{\rho}}\left(\left(1+\frac{\Psi_t}{V_{t}}\right)^{1-{\rho}}-1\right)Y_t.
\end{equation}
%Another expression is as follows. Note that 
%\begin{align*}
 %   J(v,w)=\frac{1}{\alpha}v^{1-\alpha}((v+w)^\alpha-v^\alpha)-w.
%\end{align*}
%Then, we have 
%\begin{align*}
 %   \bar{\Psi}_t=(J(V_{t-},\Psi_t)+\Psi_t)V_{t-}^{\alpha-1}.
%\end{align*}
Using the second expression for $\bar{\Psi}$, we have
\begin{equation}\label{zt2}
    \frac{ \bar{\Psi}_t\partial_u\partial_y f}{ \partial_y^2 f}= -\frac{1-{\alpha}{\rho}}{1-{\rho}}\frac{ (J(V_{t},\Psi_t)+\Psi_t)V_{t}^{-{\rho}}}{\frac{1}{1-{\rho}}V_{t}^{1-{\rho}}}Y_t=   -(1-{\alpha}{\rho})\frac{ (J(V_{t},\Psi_t)+\Psi_t)}{V_{t}}Y_t
\end{equation}
Recall  that for the felicity function given by \eqref{barf}, we have 
\begin{align*}
    \epsilon^{{f}}=-\frac{y^{-1}\partial_y {f}}{\partial_y^2 {f}} ={\alpha}, \ \rho^f=\frac{f \partial_u\partial_y f}{\partial_y f}-\partial_u f=\delta.
\end{align*}
Plugging \eqref{zt1} (resp. \eqref{zt2}) to \eqref{consumptionrate}, we finally obtain that
\begin{equation}\label{consumrate}\begin{split}
    c_t=&\frac{1}{\beta} \left[ \left( r - \delta\right) \alpha + \beta\right] Y_t+\frac{\lambda}{\beta}\left({\alpha}(e^\theta-1)-\frac{1-{\alpha}{\rho}}{1-{\rho}}\left(\left(1+\frac{\Psi_t}{V_{t}}\right)^{1-{\rho}}-1\right)\right)Y_t\\
    =&\frac{1}{\beta} \left[ \left( r - \delta\right) \alpha + \beta\right] Y_t+\frac{\lambda}{\beta}\left({\alpha}(e^\theta-1)-(1-{\alpha}{\rho})\frac{ (J(V_{t},\Psi_t)+\Psi_t)}{V_{t}}\right)Y_t.
\end{split}\end{equation}
%\fr{Compare  with   Bank-Riedel. The new term indeed is the last one, and it comes only from stochastic differential utility. }

\begin{remark}
Consider the special case that $\alpha=\frac{1}{\rho}$. Then    $f$ given in \eqref{barf} is the    time-additive aggregator
    \begin{equation}\label{timeadditive}
        f(y,u)=\frac{\delta}{1-\rho}y^{1-\rho}-\delta u.
    \end{equation}
%    where $\delta$ represents the rate of time preference and $\frac{1}{\rho}$ is the elasticity of intertemporal substitution.
Recall that the felicity function $v$ used in \cite{BR} takes the following form
\begin{align*}
    v(t,y)=e^{-\delta t}\frac{1}{\alpha'}y^{\alpha'}.
\end{align*}
If $\alpha'=1-\rho$, the utility generated by $v$ given above and the recursive utility  generated by $f$  given in \eqref{timeadditive} are equivalent. With this felicity function $f$, suppose that \eqref{parameterconstraints} is satisfied, i.e., $$\mu:=\lambda(e^\theta-1)+r+{\beta}\rho-\delta(=\lambda(e^\theta-1)+r+\beta(1-\alpha')-\delta)>0.$$ According to \eqref{consumrate}, we have
\begin{align*}
    c_t=\frac{1}{\beta}\left[\frac{r-\delta}{1-\alpha'}+\beta\right]Y_t+\frac{\lambda (e^\theta-1)}{\beta(1-\alpha')}Y_t=\frac{\lambda(e^\theta-1)+r+\beta(1-\alpha')-\delta}{\beta(1-\alpha')}Y_t.
\end{align*}
For the case that \eqref{parameterconstraints} is not valid, i.e., 
$$\lambda(e^\theta-1)+r+{\beta}{\rho}-\delta\leq 0,$$
by Proposition \ref{lumpconsumptionat0}, the optimal consumption plan is to consume all wealth $w$ at time $0$. We thus recover  the results in Subsection 4.5.2 of \cite{BR}. 
\end{remark}

\section{Conclusion}
In this paper, we consider the intertemporal choice problem with recursive preferences exhibiting local substitution in a jump-diffusion stochastic framework, thus extending the analysis of the deterministic setting in \cite{LRY}. We establish existence of the related stochastic differential utility. We prove existence and uniqueness of the solution as well as sufficient and necessary first-order conditions that allow to characterize optimal intertemporal consumption. The construction of optimal plans is much more involved than in the deterministic case. Taking up ideas from \cite{BR}, we show how one can learn more by using the concept of a minimal level of satisfaction. This minimal level of satisfaction is related to a system of forward-backward equations. The study of these equations might be of independent interest. Our results also show how the deterministic model's insights can be effectively extended to a Poisson price framework. These advancements provide a deeper understanding of optimal consumption and offer a robust foundation for future research in stochastic intertemporal choice models.

\begin{appendix}
    
\end{appendix}

\section{Proofs and Auxiliary Results}
%We first recall   properties for the level of satisfaction. %In the following, for simplicity, we always denote by $L$ a constant depending on $w,T,M$, which may vary from line to line.

 Given our assumptions, $r,\eta,\theta$  are bounded by some constant $M$.  

\begin{lemma}\label{l1} Suppose that Assumption \ref{a1} (iv) holds. For  $w'>0$, there exists a positive constant $L$ depending on $w',T,M$ such that
		\begin{description}
		\item[(i)] For any $C\in\mathcal{X}$ and $t\in[0,T]$, we have% There exists some constant $L\geq 0$, such that
		\begin{displaymath}
		Y_t^C\leq L(1+C_t).
		\end{displaymath}
		\item[(ii)] If $\{C^n\}_{n=1}^\infty\subset \mathcal{X}$ converges almost surely to $C\in\mathcal{X}$ in the weak topology of measures on $[0,T]$, then we have almost surely $Y_t^{C^n}\rightarrow Y_t^C$ for $t=T$ and for every point of continuity $t$ of $C$.
		\item[(iii)]  For  $\P^*\in\mathcal{P}$, we have%Given some $a\in(0,1)$, for any $\P^*\in\mathcal{P}$ and $p>1$ with $a p<1$, we have
		\begin{displaymath}
		\sup_{C\in\mathcal{A}(w')}\E^*[|C_T|]\leq L, \ \sup_{C\in\mathcal{A}(w')}\E[|C_T|^{2\alpha p}]<\infty,
		\end{displaymath}
        where $p$ is the constant in Assumption \ref{a1} (iv).
%		\item[(iv)] For any $t\in[0,T]$, $C\in\mathcal{A}(w)$ and $p>1$ with $\alpha p< 1$, we have
%		\begin{displaymath}
			%\E\left[|U_t^C|^p\right]\leq L.
		%\end{displaymath}
	\end{description}
\end{lemma}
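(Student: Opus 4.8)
The plan is to prove the three parts separately, with essentially all the content residing in (iii); parts (i) and (ii) use only the boundedness of the deterministic data and a portmanteau argument.

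For part (i) I would use the representation $Y_t^C=\eta_t+\int_0^t\theta_{t,s}\,dC_s$ together with the bounds $\eta_t\le M$ and $\theta_{t,s}\le M$. Since $C_{0-}=0$ we have $\int_0^t dC_s=C_t$, so $Y_t^C\le M+M C_t\le L(1+C_t)$ with $L=M$; no probabilistic input is required.

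For part (ii) I would argue $\omega$ by $\omega$. Fix an $\omega$ for which the measures $dC^n(\omega)$ converge weakly to $dC(\omega)$ on $[0,T]$. For $t=T$ the integrand $s\mapsto\theta_{T,s}$ is continuous on all of $[0,T]$, so weak convergence gives $\int_0^T\theta_{T,s}\,dC^n_s\to\int_0^T\theta_{T,s}\,dC_s$ directly, hence $Y_T^{C^n}\to Y_T^C$. For an interior continuity point $t$ of $C$ I would apply the portmanteau theorem to the bounded function $h(s)=\theta_{t,s}\mathbf{1}_{[0,t]}(s)$, whose discontinuity set is contained in $\{t\}$; since $t$ is a continuity point, $C$ carries no atom at $t$, i.e.\ $dC(\{t\})=0$, so $\int h\,dC^n\to\int h\,dC$ and therefore $Y_t^{C^n}\to Y_t^C$. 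The only care needed is to isolate the jump of the indicator at the right endpoint $t$ and to use the absence of an atom there, which is exactly why $t=T$ is singled out.

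Part (iii) is the crux and is where Assumption \ref{a1}(iv) enters. For the first bound I would fix the distinguished measure $\P^*$ from (iv). Since $r$ is bounded by $M$, the discount factor obeys $\gamma_t\ge e^{-MT}$, so the budget constraint yields
\[
e^{-MT}\,\E^*[C_T]\le \E^*\!\left[\int_0^T\gamma_t\,dC_t\right]\le w',
\]
whence $\E^*[|C_T|]=\E^*[C_T]\le e^{MT}w'=:L$ uniformly over $\mathcal{A}(w')$. For the second bound I would change measure via the density and apply H\"older's inequality with $q=\frac{1}{1-2\alpha p}$:
\[
\E\big[C_T^{2\alpha p}\big]=\E^*\!\left[\tfrac{d\P_0}{d\P^*}\,C_T^{2\alpha p}\right]\le\left(\E^*\!\left[\left(\tfrac{d\P_0}{d\P^*}\right)^{q}\right]\right)^{1/q}\left(\E^*\!\left[C_T^{2\alpha p q'}\right]\right)^{1/q'}.
\]
The decisive point is that the conjugate exponent is $q'=\frac{1}{2\alpha p}$, so $2\alpha p\,q'=1$ and the second factor collapses to $(\E^*[C_T])^{2\alpha p}$, which is controlled by the first bound; combined with $\frac{d\P_0}{d\P^*}\in L^q(\P^*)$ from (iv), this gives a uniform bound on $\E[C_T^{2\alpha p}]$.

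The main obstacle is precisely the exponent bookkeeping in (iii): the budget constraint controls only the first $\P^*$-moment $\E^*[C_T]$, so one must verify that the H\"older pairing reduces the $\P^*$-moment of $C_T^{2\alpha p}$ to exactly that first moment. This is what the choice $q=\frac{1}{1-2\alpha p}$ (equivalently $q'=\frac{1}{2\alpha p}$) in Assumption \ref{a1}(iv) is engineered to achieve, and it is the structural reason the condition $2\alpha p<1$ is imposed.
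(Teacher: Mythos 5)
Your proposal is correct and follows essentially the same route as the paper: parts (i) and (ii) are the standard boundedness and portmanteau arguments the paper leaves as "easy to check," the first bound in (iii) uses $\gamma$ bounded away from zero exactly as in the paper, and your H\"older step with $q=\frac{1}{1-2\alpha p}$ and conjugate exponent $q'=\frac{1}{2\alpha p}$ (so that $2\alpha p\,q'=1$ and the second factor collapses to $\left(\E^*[|C_T|]\right)^{2\alpha p}$) is precisely the paper's computation, stated there via the relation $2\alpha p+\frac{1}{q}=1$. Your added remark explaining why Assumption \ref{a1}(iv) is engineered around this exponent pairing is a faithful reading of the paper's intent, not a deviation.
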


\begin{proof}
	Noting that $\eta$ and $\theta$ are continuous and bounded, it is easy to check that (i) and (ii) hold.     The first conclusion of (iii) follows from the fact that $r$ is bounded and thus $\gamma$ bounded away from zero. By H\"{o}lder's inequality,  we obtain that% for $p>1$ with $\alpha p=1$,
	\begin{displaymath}
		\E[|C_T|^{2\alpha p}]=\E^*\left[\frac{d\P_0}{d\P^*}|C_T|^{2\alpha p}\right]\leq \left(\E^*\left[\left(\frac{d\P_0}{d\P^*}\right)^q\right]\right)^{\frac{1}{q}}\left(\E^*[|C_T|]\right)^{2\alpha p}<\infty,
	\end{displaymath}
	where $2\alpha p+\frac{1}{q}=1$. We finally obtain the desired result.
% We only need to show (iv). By simple calculation, we have
%	\begin{align*}
%		|U_t^C|^p&\leq L\left\{\E_t\left[\bigg|\int_t^T f(s,Y_s^C,0)ds\bigg|^p\right]+\E_t\left[\bigg|\int_t^T (f(s,Y_s^C,0)-f(s,Y_s^C,U_s^C))ds\bigg|^p\right]\right\}\\
%		&\leq L \E_t\left[\int_t^T \left(1+|Y_s^C|^{\alpha p}\right)ds\right]+L \E_t\left[\int_t^T |U_s^C|^p ds\right]\\
%		&\leq L \E_t\left[1+|C_T|^{\alpha p}\right]+L \int_t^T\E_t\left[|U_s^C|^p\right]ds.
%	\end{align*}
%	Taking expectations on both sides yields that
%	\begin{equation}\label{e2}
%		\E[|U_t^C|^p]\leq L \E[1+|C_T|^{\alpha p}]+L\int_t^T \E[|U_s^C|^p]ds.
%	\end{equation}
%	 Applying the Gronwall inequality to \eqref{e2}, we finally get the desired result.
\end{proof}

\begin{lemma}\label{l4}
	Let $C^*$ be optimal for the original problem \eqref{e1} and let $\phi^*=\nabla V(C^*)$.  Then, $C^*$ is also optimal for the following linear problem
	\begin{equation*}\label{6}
		\sup_{C\in\mathcal{A}(w)} \E\left[\int_0^T \phi^*_t dC_t\right].
	\end{equation*}
%	Furthermore, the value of this problem is finite.
\end{lemma}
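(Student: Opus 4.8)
The plan is to read Lemma \ref{l4} as the standard first-order optimality condition for the maximization of a concave functional over a convex set: since $C^*$ maximizes the concave map $C\mapsto U_0^C$ (concavity by Theorem \ref{propertyofutility}) over the convex feasible set $\mathcal{A}(w)$, the one-sided directional derivative of the utility at $C^*$ in every feasible direction must be nonpositive, and that derivative is represented precisely by the gradient $\phi^*=\nabla V(C^*)$ of \eqref{nablaV}. Concretely, I would fix an arbitrary $C\in\mathcal{A}(w)$ and form the convex combinations $C^\lambda:=(1-\lambda)C^*+\lambda C$ for $\lambda\in(0,1)$. Because $\E^*$ is linear and the budget functional is sublinear, $C^\lambda\in\mathcal{A}(w)$, so optimality of $C^*$ gives $U_0^{C^\lambda}\le U_0^{C^*}$ for every such $\lambda$.

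The second step is to reuse the linearization carried out in the proof of Theorem \ref{t2}, but now with base point $C^\lambda$ instead of $C^*$. Writing $\hat U=U^{C^\lambda}-U^{C^*}$ and $\hat Y=Y^{C^\lambda}-Y^{C^*}$, the concavity of $f$ in $(y,u)$ together with the stochastic Gronwall inequality (Proposition B.1 in \cite{KSS}) yields
\[
U_0^{C^\lambda}-U_0^{C^*}\ \ge\ \E\left[\int_0^T \exp\left(\int_0^s \partial_u f(r,Y^{C^\lambda}_r,U^{C^\lambda}_r)\,dr\right)\partial_y f(s,Y^{C^\lambda}_s,U^{C^\lambda}_s)\,\hat Y_s\,ds\right].
\]
Since $Y^{C^\lambda}-Y^{C^*}=\lambda\bigl(Y^{C}-Y^{C^*}\bigr)$, so that $\hat Y_s=\lambda\int_0^s\theta_{s,t}\,d(C-C^*)_t$, an application of Fubini's theorem and the identity in Remark \ref{r2}(i) (which extends to the signed measure $C-C^*$ by splitting it into its $dC$ and $dC^*$ parts and using linearity) turns the right-hand side into $\lambda\,\E\bigl[\int_0^T \nabla V(C^\lambda)(t)\,d(C-C^*)_t\bigr]$. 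Dividing by $\lambda>0$ and invoking $U_0^{C^\lambda}\le U_0^{C^*}$ then gives, for every $\lambda\in(0,1)$,
\[
\E\left[\int_0^T \nabla V(C^\lambda)(t)\,d(C-C^*)_t\right]\ \le\ 0.
\]

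Finally I would let $\lambda\downarrow 0$. By Theorem \ref{propertyofutility} the level of satisfaction and the utility depend continuously on the consumption plan, so $Y^{C^\lambda}\to Y^{C^*}$ and $U^{C^\lambda}\to U^{C^*}$; combined with the continuity of $\partial_y f$ and $\partial_u f$, the product-measurable density $\Phi^\lambda$ attached to $\nabla V(C^\lambda)$ in Remark \ref{r2}(i) converges pointwise to the density $\Phi^*$ of $\phi^*=\nabla V(C^*)$. Passing the limit through the expectation and the $d(C-C^*)$-integration gives $\E\bigl[\int_0^T \phi^*_t\,d(C-C^*)_t\bigr]\le 0$, that is, $\E\bigl[\int_0^T \phi^*_t\,dC_t\bigr]\le \E\bigl[\int_0^T \phi^*_t\,dC^*_t\bigr]$. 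As $C\in\mathcal{A}(w)$ was arbitrary, $C^*$ solves the linear problem.

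The main obstacle is exactly this last limit passage: one must dominate $\Phi^\lambda$ uniformly in $\lambda$ so as to justify dominated convergence against the (signed) measure $C-C^*$. This is where the structural estimates of the excerpt enter. Using $|\partial_u f|\le K$ one bounds the exponential factor by $e^{KT}$, and using the growth condition of Assumption \ref{a1}(iii) together with $Y^{C^\lambda}\le Y^{C^*}+Y^{C}$ one controls $\partial_y f(s,Y^{C^\lambda}_s,U^{C^\lambda}_s)$ by an expression in $C_T$, $C^*_T$ and the utilities; the moment bound $\sup_{C}\E[|C_T|^{2\alpha p}]<\infty$ from Lemma \ref{l1}(iii) and the $L^{2p}$-estimate \eqref{claim1} then furnish a dominating function that is integrable against $dC$ and $dC^*$. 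Verifying this integrable domination (and, if one prefers a sandwich rather than a one-sided bound, also running the same linearization with base point $C^*$ to pin the directional derivative to $\E[\int_0^T\phi^*_t\,d(C-C^*)_t]$) is the only genuinely technical part of the argument.
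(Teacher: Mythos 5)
Your overall route is the same as the paper's: convex perturbation $C^\lambda=(1-\lambda)C^*+\lambda C$, linearization of the utility via concavity of $f$ and the stochastic Gronwall inequality, Fubini plus Jacod's Theorem (1.33) to pass to the gradient, and then the limit $\lambda\downarrow 0$. The genuine gap is exactly in the step you flagged as "the only technical part": your plan to dominate $\Phi^\lambda$ uniformly in $\lambda$ and apply dominated convergence against the signed measure $d(C-C^*)$ cannot be carried out, and the bound you sketch goes in the wrong direction. Since $f(s,\cdot,u)$ is concave, $\partial_y f$ is \emph{decreasing} in $y$; hence the upper bound $Y^{C^\lambda}\leq Y^{C^*}+Y^{C}$ gives a \emph{lower} bound on $\partial_y f(s,Y^{C^\lambda}_s,U^{C^\lambda}_s)$, not an upper one. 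An upper bound requires the lower estimate $Y^{C^\lambda}\geq(1-\lambda)Y^{C^*}\geq\tfrac12 Y^{C^*}$ (valid for $\lambda\leq\tfrac12$), and even then $\partial_y f\bigl(s,\tfrac12 Y^{C^*}_s,u\bigr)$ need not be integrable on its own: Assumption \ref{a1} controls the growth of $f$, not of $\partial_y f$, and the marginal felicity may blow up where $Y^{C^*}_s$ is small (think of power-type felicity with $\eta\equiv 0$). So there is in general no integrable majorant for $\Phi^\lambda$ against $dC$, where $C$ is an arbitrary competitor plan.

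What saves the argument --- and this is how the paper proceeds --- is an asymmetric treatment of the two integrals. For the $dC$ integral no domination is needed: $\Phi^\lambda\geq 0$, so Fatou's lemma gives $\liminf_{\lambda}\E[\int_0^T\Phi^\lambda(t)dC_t]\geq\E[\int_0^T\Phi^*(t)dC_t]$, which is precisely the direction required. For the $dC^*$ integral one needs the reverse inequality, hence an integrable upper bound; here domination is possible only because the integrator is $C^*$ itself: one pairs $\partial_y f(s,Y^{C^\lambda}_s,U^{C^\lambda}_s)\leq\partial_y f\bigl(s,\tfrac12 Y^{C^*}_s,U^{C^\lambda}_s\bigr)$ with the factor $\int_0^s\theta_{s,t}dC^*_t\leq Y^{C^*}_s$ produced by that very integral, and uses concavity once more in the form $\partial_y f\bigl(s,\tfrac12 Y^{C^*}_s,u\bigr)\cdot\tfrac12 Y^{C^*}_s\leq f\bigl(s,\tfrac12 Y^{C^*}_s,u\bigr)-f(s,0,u)$, after which Assumption \ref{a1}(ii)--(iii), Lemma \ref{l1} and \eqref{claim1} (applied also to $U^{C}$ and $U^{C+C^*}$, which sandwich $U^{C^\lambda}$) furnish the $\P_0$-integrable bound. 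This pairing is unavailable on the $dC$ side because $Y^{C}_s$ bears no relation to $Y^{C^*}_s$; that is why your symmetric dominated-convergence scheme for $d(C-C^*)$ breaks down, and why the split into a one-sided Fatou step and a domination step tailored to $C^*$ is not a matter of taste but of necessity.
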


\begin{proof}
	For any $C\in\mathcal{A}(w)$ and $\varepsilon\in[0,1]$, let $C^\varepsilon=\varepsilon C+(1-\varepsilon)C^*$. For simplicity, set $Y^*=Y^{C^*}$, $Y=Y^C$, $Y^\varepsilon=Y^{C^\varepsilon}$, $U^*=U^{C^*}$, $U=U^C$, $U^\varepsilon=U^{C^\varepsilon}$, $\hat{U}=U^\varepsilon-U^*$, $\hat{Y}=Y^\varepsilon-Y^*$. By the concavity of the felicity function, we have, for any $t\in[0,T)$ and stopping time $\tau$
	\begin{align*}
	&\frac{1}{\varepsilon}\hat{U}_tI_{\{\tau>t\}}\\
    =& \frac{1}{\varepsilon}\E_t\left[I_{\{\tau>t\}}\int_t^\tau (f(s,Y_s^\varepsilon,U_s^\varepsilon)-f(s,Y_s^*,U_s^*))ds+\frac{1}{\varepsilon}\hat{U}_\tau I_{\{\tau>t\}}\right]\\
	\geq &\frac{1}{\varepsilon}\E_t\left[I_{\{\tau>t\}}\int_t^\tau \left(\partial_y f(s,Y_s^\varepsilon,U_s^\varepsilon)\hat{Y}_s+ \partial_u f(s,Y_s^\varepsilon,U_s^\varepsilon)\hat{U}_s\right)ds+\frac{1}{\varepsilon}\hat{U}_\tau I_{\{\tau>t\}}\right]\\
	=&\E_t\left[I_{\{\tau>t\}}\int_t^\tau \left(\partial_y f(s,Y_s^\varepsilon,U_s^\varepsilon)\int_0^s \theta_{s,t}(dC_t-dC_t^*) +\partial_u f(s,Y_s^\varepsilon,U_s^\varepsilon)\frac{1}{\varepsilon}\hat{U}_s\right)ds+\frac{1}{\varepsilon}\hat{U}_\tau I_{\{\tau>t\}}\right],
	\end{align*}
	where the last equality follows from the fact that
	\begin{displaymath}
		Y_s^\varepsilon-Y_s^*=\varepsilon (Y_s-Y_s^*)=\varepsilon \int_0^s \theta_{s,t}(dC_t-dC_t^*).
	\end{displaymath}
	%By Lemma \ref{l2} and Lemma \ref{l3} 
	By Proposition B.1 in \cite{KSS} and noting that $C^*$ is optimal for the utility maximization problem \eqref{e1}, we obtain that
	\begin{displaymath}
		0\geq \frac{1}{\varepsilon}(U^\varepsilon_0-U^*_0)\geq \E\left[\int_0^T \exp\left(\int_0^s \partial_uf(r,Y_r^\varepsilon,U_r^\varepsilon)dr\right)\partial_yf(s,Y_s^\varepsilon,U_s^\varepsilon)\int_0^s \theta_{s,t}(dC_t-dC^*_t)ds\right].
	\end{displaymath}
	Set $\Phi^\varepsilon(t)=\int_t^T \exp(\int_0^s \partial_uf(r,Y_r^\varepsilon,U_r^\varepsilon)dr)\partial_yf(s,Y_s^\varepsilon,U_s^\varepsilon)\theta_{s,t}ds$, $t\in[0,T]$. By the Fubini theorem, the above equation yields that, for any $\varepsilon\in(0,1]$,
	\begin{displaymath}
		\E\left[\int_0^T \Phi^\varepsilon(t)dC_t^*\right]\geq \E\left[\int_0^T \Phi^\varepsilon(t)dC_t\right].
	\end{displaymath}
	By the continuity of the utility function and the level of satisfaction w.r.t consumption, it follows that
	\begin{displaymath}
	   \Phi^*(t):=\Phi^0(t)=\lim_{\varepsilon\rightarrow 0}\Phi^\varepsilon(t), \ t\in[0,T].
	\end{displaymath}
	Since $\int_0^T \Phi^\varepsilon(t)dC_t\geq 0$, applying Fatou's lemma, we have
	\begin{equation}\label{e7}
		\liminf_{\varepsilon\rightarrow 0}\E\left[\int_0^T \Phi^\varepsilon(t)dC_t\right]\geq \E\left[\int_0^T \Phi^*(t)dC_t\right].
	\end{equation}
	We claim that
	\begin{equation}\label{e8}
	\lim_{\varepsilon\rightarrow 0}\E\left[\int_0^T \Phi^\varepsilon(t)dC^*_t\right]= \E\left[\int_0^T \Phi^*(t)dC^*_t\right].
	\end{equation}
	In order to get this result, it is sufficient to show that the family
	\begin{displaymath}
		I^\varepsilon=\int_0^T \Phi^\varepsilon(t)dC^*_t,  \ \varepsilon\in[0,\frac{1}{2}]
	\end{displaymath}
	has a $\P_0$-integrable upper bound. Recalling that the utility $U_t^C$ is concave and monotone with respect to consumption $C$ for any $t\in[0,T]$ (see Theorem \ref{propertyofutility}), we have
	\begin{displaymath}
		\varepsilon U_t+(1-\varepsilon)U_t^*\leq U_t^\varepsilon\leq U_t^{\widetilde{C}},
	\end{displaymath}
	where $\widetilde{C}=C+C^*\in\mathcal{A}(2\omega)$. Note that $Y^\varepsilon_t\geq \frac{1}{2}Y^*_t$ and  for any $\varepsilon\in[0,\frac{1}{2}]$ and $t\in[0,T]$. By the assumption on $f$, it is easy to check that
	\begin{align*}
		I^\varepsilon=& \int_0^T \exp\left(\int_0^s \partial_uf(r,Y_r^\varepsilon,U_r^\varepsilon)dr\right)\partial_yf(s,Y_s^\varepsilon,U_s^\varepsilon)\int_0^s\theta_{s,t}dC_t^* ds\\
		\leq &L \int_0^T \partial_yf(s,Y_s^\varepsilon,U_s^\varepsilon)Y_s^* ds\leq 2L\int_0^T \partial_yf\left(s,\frac{1}{2}Y_s^*,U_s^\varepsilon\right)\frac{1}{2}Y_s^*ds\\
		\leq &L \int_0^T \left[f(s,\frac{1}{2}Y_s^*, U_s^\varepsilon)-f(s,0,U_s^\varepsilon)\right]ds\\
		\leq &L \int_0^T \left[1+|Y_s^*|^\alpha+|U_s^*|+|U_s|+|U_s^{\widetilde{C}}|\right]ds.
	\end{align*}
	By Lemma \ref{l1} and \eqref{claim1}, we get the required upper bound for $I^\varepsilon$. Hence, the claim \eqref{e8} hold true. Eqs. \eqref{e7} and \eqref{e8} imply that
	\begin{displaymath}
		\E\left[\int_0^T \Phi^*(t)dC^*_t\right]\geq \E\left[\int_0^T \Phi^*(t)dC_t\right].
	\end{displaymath}
	By Theorem (1.33) in Jacod \cite{J79}, we may replace $\Phi^*$ by its optional projection which coincides with $\nabla V(C^*)=\phi^*$. The proof is complete.
\end{proof}

\begin{remark}\label{r3}
	In fact, for any stopping time $\tau$, the following conditional flat off condition still holds for the optimal consumption plan $C^*$:
	\begin{displaymath}
		\E_\tau\left[\int_\tau^T (\nabla V(C^*)(t)-M\psi_t)dC_t^*\right]=0.
	\end{displaymath}
	Otherwise, there exists a stopping time $\tau$ such that $P(A)>0$, where $$A=\left\{\E_\tau\left[\int_\tau^T \nabla V(C^*)(t)dC_t^*\right]<M\E_\tau\left[\int_\tau^T \psi_t dC_t^*\right]\right\}\in\mathcal{F}_\tau.$$ 
 Since for any $t\in[0,T]$, $\nabla V(C^*)(t)\leq M\psi_t$, it is easy to check that
	\begin{displaymath}
		0=\E\left[\int_0^T (\nabla V(C^*)(t)-M\psi_t)dC^*_t\right]\leq \E\left[\int_\tau^T (\nabla V(C^*)(t)-M\psi_t)dC^*_t\right]\leq 0,
	\end{displaymath}
	which implies that $\E[\int_\tau^T (\nabla V(C^*)(t)-M\psi_t)dC^*_t]=0$. By simple calculation, we obtain that
	\begin{align*}
		&\E\left[\int_\tau^T (\nabla V(C^*)(t)-M\psi_t)dC^*_t\right]\\
		=&\E\left[\E_\tau\left[\int_\tau^T (\nabla V(C^*)(t)-M\psi_t)dC^*_t\right]I_A+\E_\tau\left[\int_\tau^T (\nabla V(C^*)(t)-M\psi_t)dC^*_t\right]I_{A^c}\right]<0,
	\end{align*}
	which is a contradiction.
\end{remark}

\end{document}